\documentclass[11pt]{article}
%%%%%%%%%%%%%%%%%%%%%%%%%%%%%%%%%%%%%%%%%%%%%%%%%%%%%%%%%%%%%%%%%%%%%%%%%%%%%%%%%%%%%%%%%%%%%%%%%%%%%%%%%%%%%%%%%%%%%%%%%%%%%%%%%%%%%%%%%%%%%%%%%%%%%%%%%%%%%%%%%%%%%%%%%%%%%%%%%%%%%%%%%%%%%%%%%%%%%%%%%%%%%%%%%%%%%%%%%%%%%%%%%%%%%%%%%%%%%%%%%%%%%%%%%%%%
\usepackage[title]{appendix}
\usepackage{epsfig, amsmath, amssymb, amsthm, times}
\usepackage{color}

\setcounter{MaxMatrixCols}{10}

\parindent=1.5em
\parskip10pt
\textwidth=6.5in
\topmargin= 0.0in
\oddsidemargin=0in
\textheight=8.4in

\setlength{\parskip}{10pt plus 2pt minus 1pt}
\newtheorem {thm}{Theorem}[section]
\newtheorem {lem}[thm]{Lemma}

\theoremstyle{defintion}

\theoremstyle{assumption}
\newtheorem {assume}[thm]{Assumption}
\theoremstyle{remark}
\newtheorem{rem}[thm]{Remark}
\theoremstyle{example}
\newtheorem{ex}[thm]{Example}

\def\p{\partial}

\def\E{\operatorname{\mathbb E}}
\def\P{\operatorname{\mathbb P}}

\def\R{{\mathbb R}}
\def\N{{\mathbb N}}
\def\Z{{\mathbb Z}}

\def\iS{\mathcal{S}}
\def\B{\mathcal{B}}

\def\lbl{\label}
\def\be{\begin{equation}}
\def\ee{\end{equation}}
\def\p{\partial}
\def\Gn{\Gamma^n}

\def\1{\operatorname{\mathbf 1}}
\newcommand{\iu}{{i\mkern1mu}}

\begin{document}

\title{The large deviation principle for interacting dynamical systems on
random graphs}
\author{ Paul Dupuis \thanks{
Division of Applied Mathematics, Brown University, \texttt{%
Paul\_Dupuis@brown.edu},
 research supported in part by the NSF (DMS-1904992)}\; and Georgi S. Medvedev \thanks{%
Department of Mathematics, Drexel University, 3141 Chestnut Street,
Philadelphia, PA 19104, \texttt{medvedev@drexel.edu},
research supported in part by the NSF (DMS-2009233)}
}
\maketitle
\begin{abstract}
  Using the weak convergence
  approach to large deviations,
  we formulate and prove the large deviation principle (LDP) for W-random graphs
  in the cut-norm topology. This generalizes the LDP for Erd\H{o}s-R{\' e}nyi
  random graphs by Chatterjee and Varadhan. 
  Furthermore, we translate the LDP for random graphs to
  a class of interacting dynamical systems on such graphs. To this end, we demonstrate
  that the solutions of the dynamical models depend continuously on the underlying
  graphs with respect to the cut-norm and apply the contraction principle.
  \end{abstract}

\section{Introduction}\label{sec.intro}
\setcounter{equation}{0}

The problem of the macroscopic description of  motion of interacting particles has a long history 
\cite{Gol16, Jab14}. When the number of particles is large, the analysis of individual trajectories
becomes intractable and one is led to study statistical distribution of particles in the phase
space. This is done using the Vlasov equation or other kinetic equations describing the
state of the system in the continuum limit as the size of the system goes to infinity
\cite{Dob79, Neu78, BraHep77}. Modern applications ranging from neuronal networks to
power grids feature models with spatially structured interactions. The derivation of the continuum
limit for such models has to deal with the fact that in contrast to 
the classical setting used in \cite{Dob79, Neu78, BraHep77}, the particles are no longer identical,
and it has also to take into account the limiting connectivity of the network assigned by the underlying graph
sequences. This problem was addressed in \cite{Med14a, Med14b}, where the ideas from the
theory of graph limits \cite{LovGraphLim12} were used to formulate and to justify the
continuum limit for interacting dynamical systems on certain convergent graph sequences.
In particular, in  \cite{Med14a} and in the followup paper \cite{Med19}, solutions of coupled
dynamical systems on a sequence of W-random graphs were approximated by those of a
deterministic nonlocal diffusion equation on a unit interval, representing a continuum of nodes in
the spirit of the theory of graph limits. This result can be interpreted as a Law of Large Numbers
for the solutions of the initial value problems (IVPs) of interacting dynamical systems on W-random graphs.
In the present paper, we study the accuracy of the continuum limit for coupled dynamical systems
on W-random graphs at the level of large deviations, i.e., we are interested in exponentially small probabilities
of $O(1)$ deviations of the solutions of the discrete system from their typical behavior.
This is the main goal of the work.

Motivated by applications, we consider the following model of $n$ interacting particles
\begin{eqnarray}
  \label{frame}
  \dot u_i^n &=& f(u_i^n, \xi_i^n, t) +\frac{1}{n}\sum_{j=1}^n X^n_{ij}D(u_i^n, u^n_j),\\
  \label{frame-ic}
  u_i^n(0) &=& g^n_i,\quad i\in [n]:=\{1,2,\dots, n\},
\end{eqnarray}
where $u^n_i:\R^+\rightarrow \mathbb{X}$ stands for the state of particle~$i$, $f$ describes its intrinsic dynamics,
and $D$ models the pairwise interactions between the particles. The network connectivity is defined by the
graph $\Gamma^n$ with the adjacency matrix $\{X^n_{ij}\}$. The phase space $\mathbb{X}$ can be
either $\R,$ or $\R/\Z,$ or $\R^d$ depending on the model at hand. Parameters $\xi^n_i\in\R^p$ and
initial conditions $g^n_i,$ $i\in [n],$ are random in general.
Many network models in science and technology fit into the framework \eqref{frame}, \eqref{frame-ic}.
Examples include neuronal networks, power grid models, and various coupled oscillator systems to name
a few. We present the Kuramoto model of coupled phase oscillators \cite{Kur84-book}, as an illustrative example.

\begin{ex}\label{ex.Kuramoto} In the Kuramoto model, $\mathbb{X}=\R/\Z$, $f(u,\xi,t)=\xi$, and
  $D(u,v)=\sin\left(2\pi(v-u)\right)$.
  $\{\xi^n_i\}$ are independent and identically distributed (iid) random variables.
  This model was studied using Erd\H{o}s-R{\' e}nyi, small-world, and power law random graphs
  (see \cite{Med19} and references therein).
  \end{ex}

  % Clearly, one can expect a limiting behavior of the discrete model \eqref{frame}, \eqref{frame-ic}
  % for large $n$ only if the underlying graphs 
  % $(\Gamma^n)$ possess well-defined asymptotics.
  In this paper, we use W-random graphs to define network connectivity in \eqref{frame}.
  This is a flexible framework for modeling random graphs \cite{LovSze06}, which fits
  seamlessly into the analysis of the continuum limit of interacting dynamical systems
  like \eqref{frame} \cite{Med19}.
  Specifically,  given $W\in \iS=\{ U\in L^\infty([0,1]^2):\; 0\le U\le 1\}$, which prescribes
  the  asymptotic behavior of $\{\Gamma^n\}$,  we
define $\{X^n_{ij}, \; (i,j)\in [n]^2\}$ as independent random variables 
such that
\begin{equation}\label{Pedge}
\P\left(X^n_{ij}=1\right) =W^n_{ij}\quad\mbox{and}\quad \P\left(X^n_{ij}=0\right) =1-W^n_{ij},
\end{equation}
where
\begin{equation}\label{Wnij}
W^n_{ij}= n^2 \int_{Q^n_{ij}} W(x,y) dxdy,
\quad Q^n_{ij}=Q^n_i\times Q^n_j,
\;\; Q_i^n=\left[ \frac{i-1}{n}, \frac{i}{n}\right)\quad i,j\in [n].
\end{equation}

  For large $n$ the direct analysis of \eqref{frame}, \eqref{frame-ic} is not feasible
  and one is led to seek other ways. A common alternative to studying individual trajectories
  of \eqref{frame}, \eqref{frame-ic} is to consider a continuum limit as the size of the system tends to
  infinity. 
  In this case, under the suitable assumptions on $f,$ $D,$ and $W$ the discrete model \eqref{frame},
  \eqref{frame-ic} can be approximated by the following continuum limit (cf.~\cite{Med19}):
\begin{eqnarray}\lbl{cKM}
\p_t u(t,x)& = &f\left(u(t,x), t\right)+ \int W(x,y) D\left(u(t,x), u(t,y)\right) dy,\\
\lbl{cKM-ic}
u(0,x)&=&g(x).
\end{eqnarray}
Here and below, if the domain of integration is not specified, it is implicitly assumed to be  $[0,1].$
Also, we have dropped the dependence on $\xi$  and let $\mathbb{X}=\R$ to simplify the presentation. At the end of the paper,
we comment on how to extend the analysis to cover models depending on random parameters.
Until then we will study the following discrete model:
\begin{eqnarray}
  \label{KM}
  \dot u_i^n &=& f(u_i^n, t) +n^{-1}\sum_{j=1}^n X^n_{ij}D(u_i^n, u^n_j),\\
  \label{KM-ic}
  u_i^n(0) &=& g^n_i,\quad i\in [n],
\end{eqnarray}
where $u^n_i:\R^+\rightarrow \R$ and the rest  is the same as in \eqref{frame}, \eqref{frame-ic}.

Let $C\left([0,T], L^2([0,1])\right)$ stand for the space of continuous vector-valued functions
$[0,T]\ni t\mapsto u(t,\cdot)\in L^2([0,1])$ equipped with the norm (cf.~\cite{LioMag72})
$$
\|u\|_{C\left([0,T], L^2([0,1])\right)}=\sup_{t\in [0,T]} \|u(t,\cdot)\|_{L^2([0,1])}.
$$
To compare solutions of the discrete and continuous models, we represent the former as an element of
$C\left([0,T], L^2([0,1])\right)$:
\begin{equation}\label{rewrite-un}
  u^n(t,x):=\sum_{i=1}^n u^n_i(t) \1_{Q^n_i}(x),
\end{equation}
where $\1_A$ stands for the indicator function of $A$.
Then for the model \eqref{KM} with a sequence of W-random graphs \eqref{Pedge}, \eqref{Wnij}
with deterministic initial conditions
it was shown in \cite{Med19} that
$$
\lim_{n\to \infty} \|u^n-u\|_{C\left([0,T], L^2([0,1])\right)}=0\quad \mbox{a.s.}
$$
This statement can be interpreted as the Law of Large Numbers (LLN) for \eqref{KM}.
Note that the continuum limit \eqref{cKM} is deterministic, while the discrete models \eqref{KM}
are posed on random graphs.
Therefore, the solution of \eqref{cKM}, \eqref{cKM-ic} presents the typical behavior of the 
solutions of the discrete system \eqref{KM}, \eqref{KM-ic} on random graphs for large $n$.
We are interested in the deviations of $u^n$ from this typical behavior.
Below we formulate and prove an LDP for solutions of the discrete model \eqref{KM}, \eqref{KM-ic}.
Before we address this problem, we first establish an LDP for a sequence of W-random graphs
$\{\Gamma^n\}$. To this end,  we represent them as elements of $\iS$ through
\begin{equation}\label{def-Fn}
H^n=\sum_{i,j=1}^n X^n_{ij}\1_{Q^n_{ij}},
\end{equation}
where $\{X_{ij}^n\}$ is the adjacency matrix of $\Gn$.
Then using the weak convergence method \cite{BudDup19}, one can show
(see Theorem \ref{thm.Wrandom}) that $\{H^n\}$, or to be more precise a 
sequence of equivalence classes for which each $H^n$ provides a representative element, satisfies an LDP.
% with scaling
% sequence $n^2$ and the
% rate function
% \begin{equation}\label{Ups}
%   \begin{split}
%   \Upsilon(V) &=\int_{[0,1]^2}  R \left( \{ V(y), 1-V(y) \} \Vert \{ W(y), 1-W(y) \}  \right) dy\\
% &=\int_{[0,1]^2} \left\{  V(y) \log\left( {V(y)\over W(y)}\right) +
%   \left( 1-V(y) \right) \log\left( {1-V(y)\over 1-W(y)} \right) \right\} dy,
% \end{split}
% \end{equation}
% where $R(\theta\Vert\mu)$ is the relative entropy of probability measures $\theta$ and $\mu$, i.e., 
% \begin{equation*}
% R\left( \theta\left\Vert \mu\right.\right)=\int \left(\log {\frac{d\theta}{%
% d\mu }}\right)d\theta
% \end{equation*}
% if $\theta \ll \mu$ and $R\left( \theta\left\Vert \mu\right.\right)=\infty$ otherwise.
This LDP for W-random graphs 
% with the rate function \eqref{Ups} 
generalizes the LDP
for Erd\H{o}s-R{\' e}nyi graphs in \cite{ChaVar11}  
and gives logarithmic asymptotics.
The LDP is established using the same cut norm topology on $\iS$ as in \cite{ChaVar11},
which turns out to be suitable for our later application to dynamical models.
% Using a topology on $S$ that is suitable for our later application to dynamical models, if $O\subset S$ is open then 
% \[
% \liminf_{n \rightarrow \infty}\frac{1}{n^2}\log P(\tilde W^n)
% \geq - \inf_{V \in O}\Upsilon(V),
% \]
% and if $F\subset S$ is closed then 
% \[
% \limsup_{n \rightarrow \infty}\frac{1}{n^2}\log P(\tilde W^n)
% \leq - \inf_{V \in F}\Upsilon(V).
% \]
\begin{rem}\label{rem.undirected}
By construction, $\{\Gamma^n\}$ is a sequence of random directed graphs. The definition of
$\Gamma^n$ can be easily modified  if graphs $\Gamma^n$ are assumed to be undirected instead.
To this end, the entries $X^n_{ij}, \; 1\le i\le j\le n,$ are defined as above and the rest
are defined by symmetry: $X^n_{ij}=X^n_{ji},$ $1\le j<i < n.$ 
Also, the scaling sequence and the rate function used in Theorem \ref{thm.Wrandom} need to be modified accordingly.
\end{rem}

To translate the LDP for W-random graphs to the space of solutions of \eqref{cKM}, we
use the contraction principle \cite{BudDup19}. To this end, we need to show that the solutions of the
IVP for \eqref{cKM} depend continuously on $W\in \iS$ in the appropriate topology.
It would be natural and easier to establish an LDP for $\{\Gn\}$ in
the weak topology. However, the weak topology is not enough to construct the 
continuous mapping from $\iS$ to $C\left([0,T], L^2([0,1])\right)$, the space of solutions of
\eqref{cKM}, \eqref{cKM-ic}. Conversely, the strong topology, which would
guarantee the continuous dependence, is too discriminate. Random graph sequences, like
Erd\H{o}s-R{\' e}nyi graphs, do not converge in the $L^2$-norm. This suggests that like in
combinatorial problems involving random graphs (cf.~\cite{Cha17}), the right topology for the
model at hand is that generated by the cut--norm. On the one hand it metrizes
graph convergence \cite{LovGraphLim12}, i.e., the random graph sequence used in \eqref{KM}
converges in the cut-norm. On the other hand, the cut-norm is strong enough
to provide continuous dependence of solutions of \eqref{cKM}, \eqref{cKM-ic} on $W$.
It is these considerations that motivate the use of the cut topology in the 
problem of large deviations for random
graphs.

This work fits into two partially independent lines of research. On the one hand, there has been an
interest in developing the theory of large deviations for large random graphs. This
research is motivated by questions in combinatorics. Recently, building on the
results of the theory of graph limits Chatterjee and Varadhan proved an LDP for Erd\H{o}s-R{\' e}nyi
random graphs in cut norm topology \cite{ChaVar11}. Our Theorem~\ref{thm.Wrandom} generalizes the LDP of
Chatterjee and Varadhan
to W-random graphs, a large class of random graphs. We use the weak convergence techniques for large deviations
\cite{BudDup19}, which afford a short proof of the LDP. On the other hand, there has been a search for rigorous methods
for studying large networks of interacting dynamical systems. This research is motivated by problems
in statistical physics, which has been reinvigorated by widespread presence of networks in modern science. 
The continuum limit is one of the main tools for analyzing dynamics of large networks. The main result of this
paper provides fine estimates of the accuracy of the continuum limit approximation developed in \cite{Med19}
for a large class of models on W-random graphs.
% Our large deviation estimates show explicit
% relation between the connectivity of the network and the variability of its dynamics.
Previous studies of large deviations for interacting dynamical systems on random graphs like \eqref{KM}
considered models forced by white noise. For such models in \cite{OliRei19, CopDieGia20} it was shown
that if a spatially averaged model satisfies an LDP with respect to white noise forcing then so will the original
model on the random graph. This does not address large deviations due to random connectivity.
The rate function derived in this paper gives an explicit relation between
the random connectivity of the network and the variability of the network dynamics, which is often sought in
applications.
% The large deviations due to stochastic forcing can be analyzed in the same manner as
%was done for initial conditions (cf.~Theorem~\ref{thm.model-I}). We did not pursue this in the present paper to keep
%the presentation simple.\footnote{PD - I would leave this out unless 100\% sure}
% \textit{Here also need to relate
%  to the existing results on LDs for networks of interacting dynamical systems
%  e.g. \cite{OliRei19, PraHol96, BDF12, CopDieGia20}.}

After this paper was submitted for publication, there has been progress on large deviations for block and
step graphon random graph models \cite{Borgs2020LDs, Grebik2021LDs}. Using our terminology,
these papers present LDPs for W-random graph sequences, for which the graphon $W$ is a step function.
The rate functions and the scaling sequences derived in these papers are consistent with our results for
bounded graphons. The proofs of the LDPs in \cite{Borgs2020LDs, Grebik2021LDs} are built upon the
method of Chatterjee and Varadhan \cite{ChaVar11, Cha17}. We rely on the weak convergence techniques
\cite{BudDup19}.

The outline of the paper is as follows.  In the next section we formulate the assumptions on the
model and impose random initial conditions. In Section~\ref{sec.graphon}, we review certain facts
from the theory of graph limits \cite{LovGraphLim12}, which will be used in the main part of the paper.
In Section~\ref{sec.ldp},
we formulate the LDPs for the combinatorial and dynamical problems. In Section~\ref{sec.proof},
we prove the LDP for W-random graphs.
% The proof uses the weak convergence approach to large
% deviations \cite{BudDup19}.
In Section~\ref{sec.contract}, we establish the contraction principle relating
the LDPs for the combinatorial and dynamical models. Certain extensions of the main result
are discussed in Section~\ref{sec.generalize},
and the lower semicontinuity of the rate function is proved in a concluding appendix.

\section{The model}\label{sec.model}
\setcounter{equation}{0}
In this section, we formulate our assumptions on the dynamical model \eqref{KM}, \eqref{KM-ic},
except for assumptions on $\{X^n_{ij}\}$,
which were given in \eqref{Pedge}, \eqref{Wnij}.
% For convenience, we rewrite the discrete model with the simplified
% $f$ that has no dependence on random parameters:
% \begin{eqnarray}
% \dot{u}_{i}^{n}
% &=&f(u_{i}^{n},t)+n^{-1}\sum_{j=1}^{n}a_{ij}^{n}D(u_{i}^{n},u_{j}^{n}),
% \label{KM} \\
% u_{i}^{n}(0) &=&g_{i}^{n},\quad i\in \lbrack n]. \label{KM-ic}
% \end{eqnarray}%
Functions $f$ and $D$ describe the intrinsic dynamics of individual
particles and interactions between two particles at the adjacent nodes of $%
\Gamma ^{n}$ respectively. We assume that $f:\mathbb{R}^2\rightarrow \mathbb{R}$ is  bounded, and uniformly Lipschitz
continuous in $u$ in that
\begin{equation}
|f(u,t)-f(v,t)|\leq L_{f}|u-v|\quad u,v\in {\mathbb{R}},\;t\in {\mathbb{R}},
\label{Lip-f}
\end{equation}%
and  continuous in $t$ for each fixed $u$.
$D$ is a bounded and Lipschitz continuous function: 
\begin{equation}
|D(u,v)-D(u^{\prime },v^{\prime })|\leq L_{D}\left( |u-u^{\prime
}|+|v-v^{\prime }|\right) .  \label{Lip-D}
\end{equation}%
By rescaling time in \eqref{KM} if necessary, one can always achieve
that $f$ and $D$ are bounded by $1$. Thus, we assume 
\begin{equation}
|f(u,t)|\leq 1\quad \mbox{and}\quad |D(u,v)|\leq 1.  \label{D-bound}
\end{equation}
Finally, for the contraction principle in Section~\ref{sec.contract} we will
need in addition to assume that $D\in H_{\mathrm{loc}}^{s}({\mathbb{R}}^{2}),s>1,$
where $H^{s}_{\mathrm{loc}}$ stands for the Sobolev space of
functions on $\R^2$ that are
square integrable together with their generalized derivatives up to
order $s$ on any compact subset of $\R^2$.

We now turn to the  initial condition. Let $\B=L^{2}([0,1])$ with the usual norm and
associated topology. Assume that $\{G^{n}\}$ is a sequence of $\B$-valued
random variables that are independent of $\{X_{ij}^{n}\}$
and that satisfy an LDP with function $K$ and scaling sequence $n^2$. To define an initial condition
for the discrete system, we let 
\begin{equation*}
g_{i}^{n}=n\int_{Q_{i}^{n}}G^{n}(y)dy\text{ for }x\in Q_{i}^{n}.
\end{equation*}%

Suppose that $\bar{G}^{n}$ is defined by $\bar{G}^{n}(x)=g_{i}^{n}$ for $x\in Q_{i}^{n}$.
%\marginpar{GM: should $\bar{G}^{n}(x)$ be $g_{i}^{n}$ instead of $g_{i}^{n}/n$?}
Then
it is not automatic that $\{G^{n}\}$ and $\{\bar{G}^{n}\}$ have the same
large deviation asymptotics, and so we impose the following.

\begin{assume}
\label{assum:LDIC}$\{\bar{G}^{n}\}$ satisfies the LDP
in $\B$ with the rate function $K$ and scaling sequence $n^2$.
\end{assume}

\begin{rem}
As an alternative condition we could have simply assumed a large deviation property of $\{\bar{G}^{n}\}$.
However, it seems easier to pose conditions on $\{{G}^{n}\}$ under which an LDP holds than to pose conditions
on $\{{g}^{n}_i,i\in [n]\}$.
\end{rem}

\begin{ex}\label{ex.ic-1}
If $G^n=g$ for some fixed deterministic $g \in B$ then Assumption \ref{assum:LDIC}
holds with $K$ defined by $K(h)=0$ if $h=g$ and $K(h)=\infty$ otherwise.
Indeed, in this case we have 
\begin{align*}
0   \leq\left\Vert g-\bar{G}^{n}\right\Vert^2 _{L^{2}([0,1])}
 &=\sum_{i=1}^{n}\int_{Q_{i}^{n}}\left[  g(x)-n\int_{Q_{i}^{n}}g(y)dy\right]
^{2}dx\\
& =\int_{[0,1]}g(x)^{2}dx-\frac{1}{n}\sum_{i=1}^{n}\left[  n\int_{Q_{i}^{n}%
}g(y)dy\right]  ^{2}.
\end{align*}
Letting $f_{n}(x)=n\int_{Q_{i}^{n}}g(y)dy$ for $x\in Q_{i}^{n}$ we find
$f_{n}(x)\rightarrow g(x)$ a.e. with respect to Lebesgue measure, and thus by
Fatou's lemma%
\[
\liminf_{n\rightarrow\infty}\frac{1}{n}\sum_{i=1}^{n}\left[  n\int_{Q_{i}^{n}%
}g(y)dy\right]  ^{2}\geq\int_{\lbrack0,1]}g(x)^{2}dx.
\]
Hence $\left\Vert g-\bar{G}^{n}\right\Vert _{L^{2}([0,1])}\rightarrow0$.
\end{ex}

\begin{ex}\label{ex.ic-2}
Suppose that there is $M<\infty $ such that $G^{n}$ is Lipschitz
continuous with constant $M$ almost surely (a.s.). Then 
\begin{align*}
\left\Vert G^{n}-\bar{G}^{n}\right\Vert _{L^{2}([0,1])}^{2}&
=\sum_{i=1}^{n}\int_{Q_{i}^{n}}\left[ G^{n}(x)-n\int_{Q_{i}^{n}}G^{n}(y)dy%
\right] ^{2}dx \\
& \leq \sum_{i=1}^{n}\int_{Q_{i}^{n}}\left[ M\frac{1}{n}\right] ^{2}dx \\
& =\ M^{2}\frac{1}{n^{2}}  \rightarrow 0.
\end{align*}%
Since the convergence is uniform in $\omega$
$\{\bar{G}^{n}\}$ satisfies the same LDP as $\{{G}^{n}\}$, and therefore Assumption \ref{assum:LDIC}
holds.
Note that the Lipschitz condition is stronger than needed. For instance, it can be relaxed to
requiring that $G^n$ belongs to a generalized Lipschitz space \cite[Lemma~5.2]{KVMed19}.
\end{ex}

\begin{ex}\label{ex.ic-3}
Suppose there is a probability distribution $\mu $ on $\mathbb{R}$ with
bounded support (i.e., $M<\infty $ such that $\mu ([-M,M]^{c})=0$) 
and that
$\{h_{i}^{n}\}$ are iid $\mu $ for $n\in \mathbb{N}$ and $i\in \lbrack n^2]$.
Define $F^{n}(x)=h_{i}^{n}$ for $x\in [i/n^2,(i+1)/n^2)$, and identify $F^{n}$ with
its periodic extension to $\mathbb{R}$. Let $\rho \geq 0$ be a smooth convolution kernel
with compact support and define%
\begin{equation*}
G^{n}(x)=\int_{\mathbb{R}}\rho (x-y)F^{n}(y)dy.
\end{equation*}%
Then Assumption \ref{assum:LDIC} holds.
Indeed, in this case one can show that $\{F^{n}\}$ satisfies the LDP on $B$
with the weak topology on $B$
and with the rate function%
\[
J(\ell)=\int_{[0,1]}L(\ell(x))dx,
\]
where $L(b)=\sup_{a\in\mathbb{R}}[ab-\log\int e^{a}\mu(da)]$. 
\footnote{Several proofs are available, including one that extends Sanov's theorem.
However, the most direct argument is to note that Mogulskii's theorem
asserts that if $Y^n(x)=\int_0^x F^n(t)dt$, then $\{Y^n\}$ satisfies an LDP in $C([0,1])$
with the rate function $I(\phi)$ equal to $\int_0^1 L(\dot{\phi}(t))dt$ if $\phi$
is absolutely continuous with $\phi(0)=0$ and $\infty$ otherwise.
Using $F^n(x)=\dot{Y}^n(x)$ a.s. in $x$ (w.p.1), we can find the result stated for the 
sequence $\{F^n\}$ using integration by parts.}
If
$\int_{\mathbb{R}}hf_{n}dx\rightarrow\int_{\mathbb{R}}hfdx$ for all $h\in \B$
then, in particular, $\int_{\mathbb{R}}\rho(x-y)f_{n}(y)dy\rightarrow
\int_{\mathbb{R}}\rho(x-y)f(y)dy$ for each $x\in\mathbb{R}$. If in addition
$\left\vert f_{n}(y)\right\vert \leq M$ for all $n\in \N$ and $y\in [0,1]$ then $\{\int_{\mathbb{R}}\rho(x-y)f_{n}(y)dy\}$ are uniformly
equicontinuous, and thus the convergence is uniform in $x$. Therefore%
\[
f\mapsto\int_{\mathbb{R}}\rho(x-y)f(y)dy
\]
is a continuous mapping from $\B\cap\{f:\left\vert f(y)\right\vert \leq M$ for
$y\in\lbrack 0,1] \}$ into itself, but with the weak topology
on the domain and
the strong topology on the range. By the contraction principle $\{G^{n}\}$
satisfies the LDP on $\B$ with rate function
\[
K(h)=\inf\left\{  J(\ell):h(x)=\int_{\mathbb{R}}\rho(x-y)\ell(y)dy\right\}  .
\]
\end{ex}

The scaling used in Example \ref{ex.ic-3} is needed so that the large deviation scaling sequence of the initial conditions matches that of the random graph.
If another scaling is used that produces a different large deviation scaling sequence,
e.g. $n^{2\alpha}$,
then when $\alpha > 1$ the rate function for the initial conditions does not appear in the rate function for $\{u^n\}$,
and from the perspective of large deviations the initial conditions are deterministic.
If however $\alpha<1$ then the scaling sequence for $\{u^n\}$ is necessarily $n^{2\alpha}$,
and the LDP for $\{u^n\}$ will not reflect the randomness of $\{X^n_{ij}\}$.
% then the slower one would dominate, in that the large deviation properties of the corresponding dynamical model would not reflect any randomness coming from the noises with the faster scaling sequence.

\section{The space of graphons}
\label{sec.graphon}
\setcounter{equation}{0}

The key ingredient in the dynamical network models
formulated in the previous section is a sequence of random adjacency
matrices $\{X^n_{ij}\}$. The corresponding kernels $H^n$ and their
(averaged) limits $W$ are called graphons in the language of the graph
theory \cite{LovGraphLim12}. Before we can formulate the LDPs for dynamical
models, we first need to understand large deviations for random graphons $%
\{H^n\}$. To this end, in this section, we review certain facts about
graphons.

Recall the collection of random variables $\{X_{ij}^{n},\; i,j\in [n]\}$
(cf. \eqref{Pedge}, \eqref{Wnij}). Given such random variables, we define $%
H^{n}:[0,1]^{2}\rightarrow\lbrack0,1]$ by 
\begin{equation}\label{eqn:defofF}
H^n=\sum_{i,j=1}^n X^n_{ij}\1_{Q^n_{ij}}.
\end{equation}
We view $\{H^{n}\}$ as taking values in $\iS$, the space of measurable
functions from $[0,1]^{2}$ to $[0,1]$. $\iS$ is equipped with the
$\infty\rightarrow 1$ distance 
\begin{equation}  \label{cut-norm}
d_{{\infty\rightarrow 1}}(f,g)=\sup_{-1\le a,b\le 1}\left\vert
\int_{[0,1]^{2}}a(t)b(s)[f(t,s)-g(t,s)]dtds\right\vert ,
\end{equation}
where $a,b:[0,1]\rightarrow\lbrack-1,1\rbrack$ are measurable functions. The 
$\infty\rightarrow 1$ distance is derived from the $L^\infty\to L^1$
operator norm
\begin{equation}  \label{operator-norm}
\|W\|_{\infty\rightarrow 1}= \sup_{-1\le a,b\le 1} \int_{[0,1]^{2}} a(x)b(y)
W(x,y) dxdy,
\end{equation}
which in turn is equivalent to the cut norm 
\begin{equation}  \label{equivalent}
\|W\|_{\square} := \sup_{S,T}\left\vert \int_{S\times T} W(x,y)
dxdy\right\vert = \sup_{0 \le a,b \le 1} \left\vert\int_{[0,1]^{2}} a(x)b(y)
W(x,y) dxdy \right\vert,
\end{equation}
where the first supremum is taken over all measurable subsets of $[0,1]$. In
particular, we have (cf. \cite[Lemma~8.11]{LovGraphLim12}) 
\begin{equation}  \label{cut-to-infty}
\|W\|_{\square}\le \|W\|_{\infty\rightarrow 1} \le 4 \|W\|_\square.
\end{equation}

$\iS^n\subset \iS$ stands for the set of piecewise constant functions with
respect to the partition $\{Q^n_{ij}\}$. Specifically, $H^n\in \iS^n$ is
constant on each $Q^n_{ij}$. The $\infty\rightarrow 1$ distance on $\iS^n$ is
equivalent to 
\begin{equation}  \label{discrete-cut-norm}
d_{\infty\rightarrow 1}^{n}(f,g)=\sup_{ a^{n},\, b^{n}}\frac{1}{n^{2}}%
\sum_{i,j=1}^{n} a_{i}^{n}b_{j}^{n}[f(i/n,j/n)-g(i/n,j/n)],
\end{equation}
where 
$a^n=(a^n_1, a_2^n,\dots, a^n_n)$, $b^n=(b^n_1, b_2^n,\dots, b^n_n)$, and
each $a^{n}_i,b^{n}_i \in [-1,1], \;i\in [n]$.

Let $\mathcal{P}$ be the set of all measure preserving bijections of $[0,1]$.
For every $\sigma \in \mathcal{P}$ and $f\in \iS$ define 
\begin{equation}
f_{\sigma }(t,s)=f(\sigma (t),\sigma (s)).  \label{permute}
\end{equation}%
This defines an equivalence relation on $\iS$. Two elements $f$ and $g$ of $\iS$
are equivalent, $f\sim g,$ if $g=f_{\sigma }$ for some $\sigma \in \mathcal{P%
}$. By identifying all elements in the same equivalence class, we obtain the
quotient space $\hat{\iS}=\iS/^{\sim }$. The distance on $\hat{\iS}$ is defined as
follows: 
\begin{equation*}
\delta _{{\infty \rightarrow 1}}(f,g)=\inf_{\sigma }d_{{\infty \rightarrow 1}%
}(f_{\sigma },g)=\inf_{\sigma }d_{{\infty \rightarrow 1}}(f,g_{\sigma })
\end{equation*}%
By the Weak Regularity Lemma, $(\hat{\iS},\delta _{{\infty \rightarrow 1}})$
is a compact metric space \cite{LovSze07}. \ 

\section{The LDPs}
\label{sec.ldp}
\setcounter{equation}{0}

% Given $W \in S$ let $\{{H}^{n}\}$ and $\{\hat{H}^{n}\}$ be defined as in the last section.
For $\hat V\in \hat{\iS}$ let  
\begin{equation}  \label{rate}
I(\hat V)=\inf_{V\in\hat{V}}\Upsilon(V,W),
\end{equation}
where $\Upsilon$ is defined by 
\begin{equation}\label{Ups}
  \begin{split}
  \Upsilon(V,W) &=\int_{[0,1]^2}  R \left( \{ V(y), 1-V(y) \} \Vert \{ W(y), 1-W(y) \}  \right) dy\\
&=\int_{[0,1]^2} \left\{  V(y) \log\left( {V(y)\over W(y)}\right) +
  \left( 1-V(y) \right) \log\left( {1-V(y)\over 1-W(y)} \right) \right\} dy,
\end{split}
\end{equation}
and $R(\theta\Vert\mu)$ is the relative entropy of probability measures $\theta$ and $\mu$, i.e., 
\begin{equation*}
R\left( \theta\left\Vert \mu\right.\right)=\int \left(\log {\frac{d\theta}{%
d\mu }}\right)d\theta
\end{equation*}
if $\theta \ll \mu$ and $R\left( \theta\left\Vert \mu\right.\right)=\infty$ otherwise.

% \begin{equation}  \label{Ups}
% \Upsilon(M,H):=\int_{[0,1]^{2}}\left( \log\left( \frac{M({y})}{H({y})}%
% \right) M({y})+\log\left( \frac{1-M({y})}{1-H({y})}\right) \left[ 1-M({y})%
% \right] \right) dy.
% \end{equation}

\begin{thm}
  \label{thm.Wrandom}
Let $\{H^n\}$  be defined by \eqref{eqn:defofF}. Then
  $\{\hat{H}^{n}\}_{n\in\mathbb{N}}$ 
satisfies the LDP
with scaling sequence $n^2$ and rate function \eqref{rate}:  $I$ has compact level sets on $\hat \iS$,
\[
\liminf_{n\rightarrow \infty}\frac{1}{n^2} \log P\{\hat{H}^{n}\in O\} \geq - \inf_{\hat{V} \in O}I(\hat{V})
\]
for open $O \subset \hat \iS$,
and 
\[
\limsup_{n\rightarrow \infty}\frac{1}{n^2} \log P\{\hat{H}^{n}\in F\} \leq - \inf_{\hat{V} \in F}I(\hat{V})
\]
for closed $F \subset \hat \iS$.
\end{thm}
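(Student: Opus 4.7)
The plan is to use the weak convergence approach of Budhiraja--Dupuis. Because $(\hat\iS,\delta_{\infty\to 1})$ is compact (by the Weak Regularity Lemma cited in Section~\ref{sec.graphon}), the LDP is equivalent to the Laplace principle
\[
\lim_{n\to\infty} -\frac{1}{n^2} \log \E\bigl[e^{-n^2 F(\hat H^n)}\bigr] = \inf_{\hat V \in \hat\iS} \bigl[F(\hat V) + I(\hat V)\bigr]
\]
for every bounded continuous $F:\hat\iS\to\R$, together with lower semicontinuity of $I$ (which already makes level sets compact on a compact space). Since the $X^n_{ij}$ are independent $\mathrm{Ber}(W^n_{ij})$, the standard variational representation for exponential functionals of independent Bernoullis gives
\[
-\frac{1}{n^2} \log \E\bigl[e^{-n^2 F(\hat H^n)}\bigr] = \inf \E\biggl[ F(\hat{\tilde H}^n) + \frac{1}{n^2} \sum_{i,j=1}^n R\bigl(\mathrm{Ber}(\tilde V^n_{ij})\,\Vert\,\mathrm{Ber}(W^n_{ij})\bigr) \biggr],
\]
where the infimum runs over sequentially adapted $[0,1]$-valued controls $\tilde V^n_{ij}$, and $\tilde H^n = \sum_{i,j}\tilde X^n_{ij}\1_{Q^n_{ij}}$ with $\tilde X^n_{ij}$ conditionally $\mathrm{Ber}(\tilde V^n_{ij})$. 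I then write $\tilde V^n(x,y)=\tilde V^n_{ij}$ on $Q^n_{ij}$.

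The central technical lemma to establish is a concentration estimate: whenever $\tilde H^n$ is built from conditionally independent Bernoulli entries with parameters $\tilde V^n_{ij}$, one has $d_{\infty\to 1}(\tilde H^n,\tilde V^n)\to 0$ in probability. Using the equivalence \eqref{cut-to-infty} between $\|\cdot\|_{\infty\to 1}$ and $\|\cdot\|_\square$, this reduces to a uniform estimate of $|\int_{S\times T}(\tilde H^n-\tilde V^n)|$ over measurable subsets $(S,T)\subset[0,1]^2$, and then by a union bound over a suitable discretization (noting that extremizing $a,b$ in \eqref{cut-norm} can effectively be restricted to indicators of unions of blocks $Q^n_i$) to a conditional Azuma--Hoeffding bound for a single linear functional of the entries. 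This generalizes the Chatterjee--Varadhan concentration from constant $p$ to non-constant, adapted $\tilde V^n_{ij}$.

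For the Laplace upper bound I would choose $\eps$-optimizers in the variational problem; boundedness of $F$ forces the entropy cost to be uniformly bounded, so $\{\hat{\tilde V}^n\}$ is precompact in $\hat\iS$ and admits a subsequential limit $\hat V^*$, while the concentration lemma forces $\hat{\tilde H}^n\to \hat V^*$ along the same subsequence. Lower semicontinuity of $\hat V\mapsto I(\hat V)$ under $\delta_{\infty\to 1}$ (the appendix result) then yields $\liminf_n(-n^{-2}\log\E[e^{-n^2F(\hat H^n)}])\geq F(\hat V^*)+I(\hat V^*)$. For the matching lower bound, given any $V\in\iS$ with $\Upsilon(V,W)<\infty$, I would use the deterministic control $\tilde V^n_{ij}=n^2\int_{Q^n_{ij}}V$; then $\tilde V^n\to V$ pointwise a.e.\ and boundedly, dominated convergence gives $n^{-2}\sum R(\mathrm{Ber}(\tilde V^n_{ij})\Vert\mathrm{Ber}(W^n_{ij}))\to \Upsilon(V,W)$, and the concentration lemma delivers $\delta_{\infty\to 1}(\hat{\tilde H}^n,\hat V)\to 0$ in probability. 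Compactness of level sets then follows from compactness of $\hat\iS$ and the lower semicontinuity of $I$.

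The principal obstacle is the upper bound passage to the limit: one must reconcile (i) the sequential, adapted nature of the controls $\tilde V^n_{ij}$, (ii) cut-norm convergence that is only available along subsequences and after applying unknown measure-preserving permutations, and (iii) lower semicontinuity of $\Upsilon(\cdot,W)$ under $\delta_{\infty\to 1}$ rather than under pointwise or $L^1$ convergence. The concentration lemma itself, while standard for a fixed $(S,T)$, also requires care so that the discretization used in the union bound is compatible with the adaptedness of the control.
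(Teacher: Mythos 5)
Your plan is essentially the same as the paper's: the Budhiraja--Dupuis test-function formulation, the relative entropy variational representation decomposed via the chain rule into adapted conditional Bernoulli controls, a martingale concentration estimate in the cut norm to show the controlled graphon and the control agree asymptotically, and then subsequential weak-convergence + lower semicontinuity for the Laplace lower bound and a deterministic near-optimal control for the Laplace upper bound. The three obstacles you flag are precisely what the paper's argument supplies: (i) the adaptedness is handled by a martingale version of Bernstein's inequality applied after rewriting $\frac{1}{n^2}\sum_k R(\bar\theta^n_k\Vert\mu^n_k) = \Upsilon(\bar M^n, W^n)$, and a union bound over the finitely many extreme points $a^n,b^n\in\{-1,1\}^n$ (exactly the reduction to blockwise indicators you suggest; Azuma would also work); (ii) the passage through measure-preserving permutations is absorbed by working on the compact quotient $\hat\iS$ and using the trivial bound $\Upsilon(H,W)\ge\inf_\sigma\Upsilon(H_\sigma,W)$ together with Fatou before taking the infimum over representatives; (iii) lower semicontinuity of $\hat V\mapsto\inf_{V\in\hat V}\Upsilon(V,W)$ under $\delta_{\infty\to 1}$ is proved separately in the appendix by a chattering-lemma type approximation of the limit coupling measure by measure-preserving bijections. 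So the proposal is correct in outline and matches the paper's proof; the flagged difficulties are real but are exactly the places where the paper's Lemma~\ref{lem:conv}, the quotient-space compactness, and the appendix are invoked.
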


We now turn to the dynamical model \eqref{KM}, \eqref{KM-ic}.
Below, it will be convenient to rewrite \eqref{KM}, \eqref{KM-ic} as
\begin{eqnarray}
\partial _{t}u^{n}(t,x) &=&f\left( u^{n}(t,x),t\right) +\int H%
^{n}(x,y)D\left( u^{n}(t,x),u^{n}(t,y)\right) dy,  \label{KMn} \\
u^{n}(0,x) &=&g^{n}(x),  \label{KMn-ic}
\end{eqnarray}%
where as before
\begin{equation}
H^{n}(x,y):=\sum_{i,j=1}^{n}X_{ij}^{n}{\mathbf{1}}_{Q_{ij}^{n}}(x,y),\quad
u^{n}(t,x):=\sum_{i=1}^{n}u_{i}^{n}(t){\mathbf{1}}_{Q_{i}^{n}}(x),\quad %
\mbox{and}\quad g^{n}(x):=\sum_{i=1}^{n}g_{i}^{n}{\mathbf{1}}_{Q_{i}^{n}}(x).
\label{step-functions}
\end{equation}%

Recall that $\B$ and $\iS$ stand for the space of initial conditions and 
the space of graphons respectively. We use the $L^{2}$%
-distance on $\B$ and the $\infty \rightarrow 1$ distance on $\iS$. Let $%
\mathcal{X}:=\iS\times \B$ endowed with the product topology. On $\B,$ $\iS,$ and $\mathcal{X}$
we define the equivalence relations: 
\begin{equation*}
  \begin{split}
    g\sim g^{\prime }&\quad \mbox{if}\quad g^{\prime }=g_{\sigma },\\
    W\sim W^{\prime }&\quad \mbox{if}\quad W^{\prime }=W_{\sigma },
    \end{split}
\end{equation*}%
and 
\begin{equation*}
(W,g)\sim (W^{\prime },g^{\prime })\quad \mbox{if}\quad W^{\prime
}=W_{\sigma }\;\&\;g^{\prime }=g_{\sigma }\quad \mbox{for some}\quad \sigma
\in \mathcal{P}.
\end{equation*}%
Define the quotient spaces $\hat{\B}=\B/^{\sim }$ and $\mathcal{\hat{X}}:=%
\mathcal{X}/^{\sim }$. The distance on $\mathcal{\hat{X}}$ is given by 
\begin{equation}
d_{\mathcal{\hat{X}}}\left( \widehat{(U,g)},\widehat{(V,h)}\right)
=\inf_{\sigma}\left\{ \Vert U_{\sigma}-V\Vert _{\infty \rightarrow 1}+\Vert g_{\sigma}-h\Vert _{L^{2}([0,1])}\right\} ,  \label{d-Xhat}
\end{equation}%
where $(U,g)\in \widehat{(U,g)}$ and $(V,h)\in \widehat{(V,h)}$ are
arbitrary representatives.

Likewise, let $\mathcal{Y}:=C([0,T],B)$ and $\mathcal{\hat{Y}}:=C([0,T],\hat{%
B})$. $\mathcal{\hat{Y}}$ is a quotient space under the following relation: 
\begin{equation*}
\mathcal{Y}\ni u\sim u^{\prime }\quad \mbox{if}\quad u^{\prime
}(t,x)=u(t,\sigma (x)),\;(t,x)\in \lbrack 0,T]\times \lbrack 0,1]
\end{equation*}%
for some $\sigma \in \mathcal{P}$. The distance on $\mathcal{\hat{Y}}$ is
given by 
\begin{equation}
d_{\mathcal{\hat{Y}}}\left( \hat{u},\hat{v}\right) =\inf_{\sigma}\Vert u_{\sigma}-v \Vert _{C(0,T;L^{2}([0,1]))},
\label{d-Yhat}
\end{equation}%
where $u\in \hat{u}$ and $v\in \hat{v}$ are arbitrary representatives.

Given $(W,g)\in \mathcal{X}$ let $u\in\mathcal{Y}$ stand for the
corresponding solution of the IVP \eqref{cKM}, \eqref{cKM-ic}. By uniqueness
of solution of the IVP \eqref{cKM}, \eqref{cKM-ic} 
\begin{equation*}
F:~\mathcal{X}\ni(W,g)\mapsto u\in \mathcal{Y}
\end{equation*}
is well--defined. Furthermore, it maps all members of a given equivalence
class of $\mathcal{X}$ to the same equivalence class of $\mathcal{Y}$: 
\begin{equation*}
F(W_\sigma, g_\sigma)=u_\sigma\quad \forall \sigma\in\mathcal{P}.
\end{equation*}
Thus, $F$ may be viewed as a map between $\mathcal{\hat X}$ and $\mathcal{%
\hat Y}$.

\begin{lem}
\label{lem.c-dep} $F:\mathcal{\hat X}\rightarrow \mathcal{\hat Y}$ is a
continuous mapping.
\end{lem}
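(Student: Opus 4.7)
My plan is to prove continuity by a Grönwall argument on $w^k=u^k-u$, with the cut--norm convergence $W^k\to W$ converted into $L^2_x$ control via a separable approximation of $D$.

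First I would reduce to the non-quotient setting. Suppose $\widehat{(W^k,g^k)}\to\widehat{(W,g)}$ in $\hat{\mathcal{X}}$. For each $k$, choose a measure-preserving bijection $\sigma_k\in\mathcal{P}$ achieving the infimum in \eqref{d-Xhat} up to $1/k$, so that $\|W^k_{\sigma_k}-W\|_{\infty\to 1}+\|g^k_{\sigma_k}-g\|_{L^2}\to 0$. A direct change of variables in \eqref{cKM}--\eqref{cKM-ic} shows the equivariance $F(W_\sigma,g_\sigma)=F(W,g)_\sigma$, so $F(W^k_{\sigma_k},g^k_{\sigma_k})$ and $F(W^k,g^k)$ represent the same class in $\hat{\mathcal{Y}}$. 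It therefore suffices to prove: if $W^k\to W$ in $\|\cdot\|_{\infty\to 1}$ and $g^k\to g$ in $L^2$, then $u^k\to u$ in $C([0,T],L^2)$.

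Next I would derive a Grönwall-type inequality. Writing \eqref{cKM} in integral form and decomposing the interaction term as $W^k[D(u^k,u^k)-D(u,u)]+(W^k-W)D(u,u)$, the Lipschitz conditions \eqref{Lip-f}, \eqref{Lip-D}, the bound $0\le W^k\le 1$, Fubini, and Cauchy--Schwarz give an estimate of the form
\begin{equation*}
\|w^k(t,\cdot)\|_{L^2}^2 \le C\|g^k-g\|_{L^2}^2 + C\int_0^t \|w^k(s,\cdot)\|_{L^2}^2\,ds + C\int_0^t R^k(s)^2\,ds,
\end{equation*}
where $R^k(s):=\big\|\int(W^k-W)(\cdot,y)\,D(u(s,\cdot),u(s,y))\,dy\big\|_{L^2_x}$. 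Grönwall's lemma then reduces the whole problem to showing $\sup_{s\in[0,T]}R^k(s)\to 0$.

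The main obstacle, and the step where the Sobolev hypothesis $D\in H^s_{\mathrm{loc}}(\mathbb{R}^2)$, $s>1$, is essential, is this last convergence: cut--norm convergence is strictly weaker than $L^1$ or $L^2$ convergence, so one cannot simply pair $W^k-W$ with an $L^2$ test function. I would circumvent this by approximating $D$ by a separable sum. Since $|f|,|D|\le 1$, $u$ takes values in a bounded set $[-M,M]$ (for bounded initial data; truncation handles the general case). Because $s>1$ in two dimensions, $H^s\hookrightarrow C^0$ and the Fourier coefficients of a smooth, compactly supported periodic extension of $D$ are absolutely summable, so given $\varepsilon>0$ there is a finite separable sum $D_N(r,\rho)=\sum_{j=1}^N c_j\,\alpha_j(r)\,\beta_j(\rho)$ (trigonometric products) with $\sup_{[-M,M]^2}|D-D_N|<\varepsilon$. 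Since $|W^k-W|\le 2$, the tail contributes at most $2\varepsilon$ pointwise in $x$ to $R^k(s)$.

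For each product term, set $\psi^k_j(x,s)=\int(W^k-W)(x,y)\,\beta_j(u(s,y))\,dy$. Uniformly, $\|\psi^k_j(\cdot,s)\|_{L^\infty_x}\le 2\|\beta_j\|_\infty$. By duality and \eqref{operator-norm}, since $\beta_j\circ u$ is bounded,
\begin{equation*}
\|\psi^k_j(\cdot,s)\|_{L^1_x}=\sup_{|\phi|\le 1}\Big|\!\int\!\!\int\phi(x)\beta_j(u(s,y))(W^k-W)(x,y)\,dx\,dy\Big|\le C_j\|W^k-W\|_{\infty\to 1}.
\end{equation*}
The interpolation $\|\psi\|_{L^2}^2\le\|\psi\|_\infty\|\psi\|_{L^1}$ then yields $\|\psi^k_j(\cdot,s)\|_{L^2_x}=O(\|W^k-W\|^{1/2}_{\infty\to 1})$, uniformly in $s$. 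Fixing $\varepsilon$ first, sending $k\to\infty$, and then letting $\varepsilon\downarrow 0$ gives $\sup_s R^k(s)\to 0$, which combined with Grönwall's inequality completes the proof. The crux of the argument is the separability-plus-interpolation step: it converts the $L^\infty$-bilinear character of the cut--norm into the $L^2_x$ control required by the energy estimate, and it is precisely here that $s>1$ is used.
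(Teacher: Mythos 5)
Your proof is correct, and it uses the same essential idea as the paper (exploiting $D\in H^{s}_{\mathrm{loc}}$, $s>1$, to write $D$ as a superposition of products of bounded one--variable functions, which is exactly what pairs with the cut norm), but the surrounding mechanics are genuinely different. The paper first passes to the Galerkin discretizations $u^{n},v^{n}$ (Lemma~\ref{lem.cont-cut}), uses the exact Fourier integral representation $D(\mathbf{u})=\int e^{2\pi\iu\,\mathbf{u}\cdot\mathbf{z}}\phi(\mathbf{z})\,d\mathbf{z}$ with $\phi=\widehat D\in L^1(\R^2)$, decomposes each exponential into real/imaginary parts bounded by $1$ to obtain $4T\|\phi\|_{L^1}\|U^n-V^n\|_{\infty\to1}$, estimates $\int\sup_{s\le T}|u^n-v^n|\,dx$ (an $L^1_x$ quantity) by Gronwall, and only then converts to $L^2_x$ by multiplying by the uniform pointwise bound $2M$. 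You instead work directly at the continuum level, replace the Fourier integral by a finite trigonometric approximation $D_N=\sum_j c_j\alpha_j\beta_j$ with a small uniform tail, and convert cut--norm control to $L^2_x$ via duality to $L^1_x$ followed by the interpolation $\|\psi\|_{L^2}^2\le\|\psi\|_\infty\|\psi\|_{L^1}$. Both routes yield the same square--root modulus of continuity $\|u-v\|_{C([0,T],L^2)}\lesssim(\|U-V\|_{\infty\to1}+\|g-h\|_{L^2})^{1/2}$; your interpolation step is arguably a cleaner explanation of where that square root comes from, while the paper's Galerkin route has the side benefit of also justifying \eqref{cKM}--\eqref{cKM-ic} as the continuum limit of the discrete dynamics. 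One small caution common to both arguments: the uniform $L^\infty$ bound $M$ on solutions, which you invoke to restrict $D$ to $[-M,M]^2$ and the paper invokes in \eqref{max}, requires $g,h\in L^\infty$; since $\B=L^2([0,1])$, you should say a bit more than ``truncation handles the general case,'' for instance by approximating $g\in L^2$ by $g\wedge R\vee(-R)$ and using the (unconditional) $L^2$ Gronwall bound from the first reduction to control the truncation error uniformly in $k$.
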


Lemma~\ref{lem.c-dep} will be proved in Section~\ref{sec.contract}. With
Theorem~\ref{thm.Wrandom} and Lemma~\ref{lem.c-dep} in place, we use the
Contraction Principle to derive the LDP for solutions of the discrete model %
\eqref{KMn}, \eqref{KMn-ic}. In addition, Lemma~\ref{lem.c-dep} justifies
\eqref{cKM}, \eqref{cKM-ic} as a continuum limit for discrete models
\eqref{KM}, \eqref{KM-ic} on any convergent sequence of dense graphs.

We remind the reader that initial conditions are assumed to be independent of the random graph.
\begin{thm}
\label{thm.model-I} For $W\in \iS$ let $\{(H^{n},g^{n})\}$ be
a sequence of random graphons and random initial data (cf.~%
\eqref{step-functions}),
and let Assumption \ref{assum:LDIC} hold. Denote by $\{u^{n}\}$ the corresponding solutions of %
\eqref{KMn}, \eqref{KMn-ic}. Then $\{\hat{u}^{n}\}$ satisfies an LDP on $%
\mathcal{\hat{X}}$ with scaling sequence $n^2$ and the rate function
\begin{equation*}
J(\hat{u})=\inf \{I(\hat{W})+K(\hat{g}):\quad \widehat{(W,g)}=F^{-1}(\hat{u%
})\}.
\end{equation*}
\end{thm}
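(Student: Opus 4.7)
The plan is to derive the LDP for $\{\hat u^n\}$ by combining the LDP for the random graphons (Theorem~\ref{thm.Wrandom}) with the LDP for the random initial conditions (Assumption~\ref{assum:LDIC}) into a joint LDP for $\{\widehat{(H^n,\bar G^n)}\}$ on $\mathcal{\hat X}$, and then applying the contraction principle to the continuous solution map $F:\mathcal{\hat X}\to\mathcal{\hat Y}$ from Lemma~\ref{lem.c-dep}.

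First I would record the joint LDP on the product $\hat{\iS}\times\B$: because $\{X_{ij}^n\}$ and $\{G^n\}$ are independent by hypothesis, the pair $\{(\hat H^n,\bar G^n)\}$ satisfies an LDP at scale $n^2$ with rate function $(\hat W,g)\mapsto I(\hat W)+K(g)$. This is the standard additivity of rate functions for two independent sequences at a common speed, obtained either via factorization of Laplace functionals together with Varadhan's lemma or via the Dawson--G\"artner argument. The next step is to transfer this LDP to $\mathcal{\hat X}$. Exponential tightness is automatic: $\hat{\iS}$ is compact by the Weak Regularity Lemma, and $K$ has compact level sets in $\B$ by Assumption~\ref{assum:LDIC}. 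To identify the rate function on $\mathcal{\hat X}$ one analyzes the exponential decay of $\P(\widehat{(H^n,\bar G^n)}\approx\widehat{(W,g)})$; since the joint permutation orbit contributes only a multiplicity of size $e^{O(n\log n)}=e^{o(n^2)}$, this combinatorial factor is subleading at speed $n^2$, and the rate function comes out to be the additive $J_{\mathcal{\hat X}}(\widehat{(W,g)})=I(\hat W)+K(\hat g)$, where $K(\hat g):=\inf\{K(g'):g'\in\hat g\}$. Finally, Lemma~\ref{lem.c-dep} provides continuity of $F$, and the contraction principle delivers the LDP for $\hat u^n=F(\widehat{(H^n,\bar G^n)})$ with rate function
\[ J(\hat u)=\inf\bigl\{I(\hat W)+K(\hat g):F(\widehat{(W,g)})=\hat u\bigr\}. \]

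The main technical obstacle is the transfer of the joint LDP from $\hat{\iS}\times\B$ to $\mathcal{\hat X}$, because $\mathcal{\hat X}$ is the quotient by the diagonal (not product) action of $\mathcal P$, so there is no natural continuous map $\hat{\iS}\times\B\to\mathcal{\hat X}$ through which one could invoke the contraction principle directly. The natural remedy is to work one level below: either prove the joint LDP on the unquotient space $\iS\times\B$ by rerunning the weak convergence proof of Theorem~\ref{thm.Wrandom} jointly with the controls coming from the representation of $\{\bar G^n\}$, which factor due to independence, and then push down via the continuous quotient $\iS\times\B\to\mathcal{\hat X}$; or argue directly at the level of probabilities using the $o(n^2)$ orbit-cardinality bound. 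Either route produces $J_{\mathcal{\hat X}}$, after which the contraction via $F$ is immediate from Lemma~\ref{lem.c-dep}.
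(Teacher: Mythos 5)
Your outline coincides with the paper's own treatment, which is terse: the paper simply invokes Theorem~\ref{thm.Wrandom}, Assumption~\ref{assum:LDIC}, Lemma~\ref{lem.c-dep}, and the contraction principle, without spelling out how the joint LDP on $\hat{\mathcal{X}}$ is obtained. You deserve credit for explicitly flagging that step: the additive LDP from independence lives naturally on $\hat{\iS}\times\B$, while $F$ is defined on $\hat{\mathcal{X}}=(\iS\times\B)/\!\sim$ with $\sim$ the \emph{diagonal} action of $\mathcal{P}$, and there is indeed no well-defined map $\hat{\iS}\times\B\to\hat{\mathcal{X}}$ of the form $(\hat{W},g)\mapsto\widehat{(W,g)}$ because the result depends on the choice of representative $W\in\hat{W}$.

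Both of your proposed remedies need caution, however. The orbit-cardinality argument is not valid as stated: $\mathcal{P}$ is the uncountable group of all measure-preserving bijections of $[0,1]$, not $S_n$, so the $e^{O(n\log n)}$ count applies only to the finite subgroup permuting the blocks $\{Q_i^n\}$, and one must separately show that this subgroup saturates the $\delta_{\infty\to1}$-infimum for the step functions $(H^n,\bar{G}^n)$; that is not automatic. Your first remedy, rerunning the weak-convergence proof of Theorem~\ref{thm.Wrandom} jointly with the (independent, hence factoring) controls for the initial data, is the right direction, but the tightness step re-encounters the same diagonal/product tension: a subsequence along which $\hat{\bar{H}}^n$ converges in $\hat{\iS}$ and the controlled initial data converge in $\B$ need not converge jointly in $\hat{\mathcal{X}}$, because the aligning $\sigma_k$ chosen for the graph coordinate may scramble the initial-data coordinate, so one must argue more carefully (for example exploiting that both sequences are adapted to the same partition $\{Q^n_i\}$). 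Finally, a small caution on the rate function: your reading $K(\hat{g}):=\inf_{g'\in\hat{g}}K(g')$ decouples the two $\mathcal{P}$-infima, whereas the contraction principle applied to the quotient map $\iS\times\B\to\hat{\mathcal{X}}$ produces the coupled infimum $\inf_{\sigma}\{\Upsilon(W_\sigma,W)+K(g_\sigma)\}$, which can be strictly larger than $I(\hat{W})+\inf_{g'\in\hat{g}}K(g')$; the stated formula should be read in the coupled sense to be consistent with the contraction principle.
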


%     Next suppose that in addition to random graphon in \eqref{KMn}, the initial data
%      $(g_{i}^{n},\;i\in \lbrack n])$ are IID RVs with distribution $\gamma$
%    on some bounded subset of $\mathbb{R}$, and independent of $\{a_{ij}^{n}\}$.
%    We define $u^{n}$ by \eqref{step-functions}, \eqref{KMn} and \eqref{KMn-ic}.
%    We note that the rate function $I$ for $\{\tilde{W}^{n}\}$ in the space of
%    graphons will be identified later on. With regard to the initial condition $%
%   \{u^{n}(0,\cdot )\}$, we have that a large deviation theorem holds here as
%    well in the space $L^{2}([0,1])$. Let 
%   \begin{equation*}
%    C(b)=\inf \left\{ R\left( \alpha \left\Vert \gamma \right. \right) :\int
%    y\alpha (dy)=b\right\} .
%   \end{equation*}%
%    Given $\ell \in L^{2}([0,1])$, the rate function $J:L^{2}([0,1])\rightarrow
%   \lbrack 0,\infty ]$ is defined by 
%   \begin{equation*}
%    J(\ell )=\int_{[0,1]}C(\ell (x))dx.
%   \end{equation*}
%   
%    With this notation in place, we can define the rate function $K$ (should be
%    precise where $u$ takes values) for $\{u^{n}\}$. We have 
%   \begin{equation*}
%    K(u)=\inf \left\{ I(w)+J(\ell ):\eqref{cKM}\text{ and }\eqref{cKM-ic}\text{
%    hold with }W=w\text{ and }g=\ell \right\} .
%   \end{equation*}

\section{The proof of Theorem~\protect\ref{thm.Wrandom}}
\label{sec.proof}
\setcounter{equation}{0}

\subsection{The weak convergence approach}\lbl{sec.represent}
The proof Theorem~\ref{thm.Wrandom} is based on the weak convergence method of \cite{BudDup19}.
We use a ``test function'' characterization of large deviations (see \cite[Theorem 1.8]{BudDup19}). 
The proof that $I$ has compact level sets appears in the appendix.
To complete the proof of the LDP for $\{\hat{H}^{n}\}$, it is sufficient to show that for each bounded and continuous (with respect to $\delta_{{\infty\rightarrow 1}}$)
$G:\hat{\iS}\rightarrow\mathbb{R}$ ,
\[
-\frac{1}{n^{2}}\log Ee^{-n^{2}G(\hat{H}^{n})}\rightarrow\inf_{\hat{V}\in
\hat{\iS}}[I(\hat{V})+G(\hat{V})] \quad\mbox{as}\; n\to\infty .
\]

At the heart of the weak convergence approach lies the following
representation for the Laplace integrals: 
\begin{equation}  \label{represent}
-\frac{1}{n^{2}}\log Ee^{-n^{2}G(\hat{F}^{n})}= \inf E\left[ \frac{1}{n^{2}}%
R\left( \theta^{n}\left\Vert \mu^{n}\right. \right) +G(\hat{\bar{F}}^{n})%
\right] ,
\end{equation}
where $\mu^{n}$ is the product measure corresponding to $\{X_{ij}^{n}\}$ on $%
\{0,1\}^{n^{2}}$
% and the relative entropy 
% \begin{equation*}
% R\left( \theta\left\Vert \mu\right.\right)=\int \left(\log {\frac{d\theta}{%
% d\mu }}\right)d\mu.
% \end{equation*}
and the infimum in \eqref{represent} is
taken over all probability measures $\theta^{n}$ on $\{0,1\}^{n^{2}}$ 
(cf.~\cite[Proposition~2.2]{BudDup19}). 
Here $\hat{\bar{F}}^{n}$ is analogous to $\hat{F}^{n}$,
in that
\begin{equation*}
\bar{F}^{n}(y)=\bar{X}_{ij}^{n}~\ \text{for }y\in Q^n_{ij},
\end{equation*}
where $\{\bar{X}_{ij}^{n}\}$ has joint distribution $\theta^{n}$, and $\hat{%
\bar{F}}^{n}$ is the corresponding equivalence class.

By \eqref{represent}, the proof of Theorem~\ref{thm.Wrandom} is reduced to showing the convergence of variational problems: for each bounded and continuous $G$  
\begin{equation}  \label{eqn:conv}
\inf_{\theta^n} E\left[ \frac{1}{n^{2}}R\left( \theta^{n}\left\Vert
\mu^{n}\right. \right) +G(\hat{\bar{F}}^{n})\right] \rightarrow\inf_{\hat{V}%
\in\hat{\iS}}[I(\hat{V})+G(\hat{V})] \quad\mbox{as}\;n\rightarrow\infty.
\end{equation}

\subsection{A Law of Large Numbers type result}

\label{sec.LLN}

Let $a_{k},k=1,\ldots,n^{2}$ be some enumeration of the points in $%
\{1,\ldots,n\}^{2}$, and let $k(i,j)$ be defined by $a_{k(i,j)}=(i,j)$. Let $%
\bar{\theta}_{k}^{n}$ be (a version of) the conditional distribution on
variable $\bar{X}_{a_{k}}^{n}$, given $\bar{X}_{a_{s}}^{n},s<k$. Thus for $%
m=0,1$,%
\begin{equation*}
\bar{\theta}_{k}^{n}(\{m\})(\omega)=P\left\{ \bar{X}_{a_{k}}^{n}=m\left\vert 
\bar{X}_{a_{1}}^{n},\ldots,\bar{X}_{a_{k-1}}^{n}\right. \right\}(\omega) .
\end{equation*}
We can decompose $\theta^{n}$ and $\mu^{n}$ into products of these  conditional distributions, and then by the chain rule (see for example \cite[Proposition 3.1]{BudDup19}), 
\begin{equation}
\label{eqn:RER1}
E\left[ \frac{1}{n^{2}}R\left( \theta^{n}\left\Vert \mu^{n}\right. \right)
+G(\hat{\bar{H}}^{n})\right] =E\left[ \frac{1}{n^{2}}\sum_{k=1}^{n^{2}}R%
\left( \bar{\theta}_{k}^{n}\left\Vert \mu_{k}^{n}\right. \right) +G(\hat{%
\bar{H}}^{n})\right] ,
\end{equation}
where $\mu_{k}^{n}(A)=P(X_{a_{k}}^{n}\in A)$. Note that $\bar{\theta}%
_{k}^{n} $ is random and measurable with respect to $\mathcal{F}_{k-1}^{n}$,
where $\mathcal{F}_{k}^{n}=\sigma(\bar{X}_{a_{s}}^{n},s\leq k)$,
while $\mu^n_k$ is deterministic.
For an analogous calculation but with more details see \cite[Section 3.1]{BudDup19}.

We would like to relate the weak limits of $\{\hat{\bar{H}}^{n}\}$ to a
function that measures the \textquotedblleft new\textquotedblright\ link
probabilities under $\theta^{n}$, as well as the cost to produce these new
probabilities. The original probabilities are $\mu_{k}^{n}(\{1\})$, and the
new ones are $\bar{\theta}_{k}^{n}(\{1\})$. Let 
\begin{equation*}
\bar{M}^{n}({y})=\bar{\theta}_{k}^{n}(\{1\})~\ \text{if } {y}\in Q^n_{ij}.
\end{equation*}
Note that $\{\bar{M}^{n}\}$ are random variables with values in $\iS$, and and
that since $\hat{\iS}$ is compact $\{\hat{\bar{M}}^{n}\}$ and $\{\hat{\bar{H}}%
^{n}\}$ are automatically tight.

Letting 
\begin{equation*}
W^{n}({x,y})=\mu_{k}^{n}(\{1\})~\ \text{if } {(x,y)}\in Q^n_{ij},
\end{equation*}
we can write 
\begin{equation}
\label{eqn:RER2}
\frac{1}{n^{2}}\sum_{k=1}^{n^{2}}R\left( \bar{\theta}_{k}^{n}\left\Vert
\mu_{k}^{n}\right. \right) =\Upsilon(\bar{M}^{n},W^{n}),
\end{equation}
where $\Upsilon(\cdot,\cdot)$ is defined in \eqref{Ups}. Note that while $%
W^{n}$ is deterministic, $\bar{M}^{n}$ need not be. We will also want to
note that trivially
\begin{equation*}
\Upsilon(H,W)\geq\inf_{\sigma}\Upsilon(H_{\sigma},W)
\end{equation*}
for any $H,W \in \iS$.

We next state a LLN type result for the sequence of ``controlled'' random graphs $\{\bar{H}^{n}\}$.

\begin{lem}
\label{lem:conv}For any $\delta>0$ 
\begin{equation*}
P\left( d_{\infty\rightarrow 1}^{n}(\bar{H}^{n},\bar{M}^{n})\geq\delta%
\right) \rightarrow0,
\end{equation*}
and therefore for any $\delta>0$%
\begin{equation*}
P\left( \delta_{{\infty\rightarrow 1}}(\hat{\bar{H}}^{n},\hat{\bar{M}}%
^{n})\geq \delta\right) \rightarrow0.
\end{equation*}
\end{lem}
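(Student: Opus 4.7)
\textbf{Proof plan for Lemma \ref{lem:conv}.} The key structural fact is that $\bar H^n$ is obtained from $\bar M^n$ by Bernoulli sampling along the filtration $\{\mathcal{F}^n_k\}$: by construction $E[\bar X^n_{a_k}\mid \mathcal{F}^n_{k-1}]=\bar\theta^n_k(\{1\})$, so if I set $Y^n_k:=\bar X^n_{a_k}-\bar\theta^n_k(\{1\})$, then $(Y^n_k)_{k=1}^{n^2}$ is a martingale difference sequence with $|Y^n_k|\le 1$ a.s. The discrete cut-norm distance can be rewritten as
\begin{equation*}
d_{\infty\rightarrow 1}^n(\bar H^n,\bar M^n)
=\sup_{a,b\in[-1,1]^n}\frac{1}{n^2}\sum_{k=1}^{n^2} a_{i(k)} b_{j(k)} Y^n_k,
\end{equation*}
where $(i(k),j(k))=a_k$. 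My plan is to bound this supremum via martingale concentration plus an $\epsilon$-net argument on $[-1,1]^n\times[-1,1]^n$.

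First, for a fixed pair $(a,b)\in[-1,1]^n\times[-1,1]^n$, the weighted sum $S_n(a,b):=\sum_{k=1}^{n^2} a_{i(k)}b_{j(k)}Y^n_k$ is itself a martingale with bounded increments of absolute value at most $1$. By the Azuma--Hoeffding inequality,
\begin{equation*}
P\bigl(|S_n(a,b)|\ge n^2\eta\bigr)\le 2\exp\bigl(-n^2\eta^2/2\bigr)
\end{equation*}
for every $\eta>0$. This gives the correct exponential rate in $n^2$ but only for a single $(a,b)$.

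Next I pass from a single pair to the supremum by a standard net argument. Fix $\epsilon=\delta/4$ and let $\mathcal{N}_\epsilon\subset[-1,1]^n$ be an $\epsilon$-net in the $\ell^\infty$ norm; one may take $|\mathcal{N}_\epsilon|\le (3/\epsilon)^n$. For any $a,a'\in[-1,1]^n$ with $\|a-a'\|_\infty\le\epsilon$ and any $b\in[-1,1]^n$,
\begin{equation*}
\frac{1}{n^2}|S_n(a,b)-S_n(a',b)|\le \frac{1}{n^2}\sum_{i,j}\epsilon\cdot 1\cdot 1=\epsilon,
\end{equation*}
and likewise in the $b$ variable, so $d_{\infty\rightarrow 1}^n(\bar H^n,\bar M^n)\le \max_{(a,b)\in\mathcal{N}_\epsilon^2} n^{-2}|S_n(a,b)|+2\epsilon$. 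Applying the Azuma bound with $\eta=\delta/4$ and a union bound,
\begin{equation*}
P\bigl(d_{\infty\rightarrow 1}^n(\bar H^n,\bar M^n)\ge\delta\bigr)
\le 2\left(\tfrac{12}{\delta}\right)^{2n}\exp\bigl(-n^2\delta^2/32\bigr),
\end{equation*}
which tends to $0$ as $n\to\infty$ because the quadratic exponent in $n$ beats the linear one.

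Finally, for the second conclusion, since both $\bar H^n$ and $\bar M^n$ lie in $\iS^n$, the discrete cut-norm $d_{\infty\rightarrow 1}^n$ is equivalent to $d_{\infty\rightarrow 1}$ restricted to $\iS^n$, and the quotient distance satisfies $\delta_{\infty\rightarrow 1}(\hat{\bar H}^n,\hat{\bar M}^n)\le d_{\infty\rightarrow 1}(\bar H^n,\bar M^n)$ by taking $\sigma$ to be the identity; combining these gives the claim about $\delta_{\infty\rightarrow 1}$. The main obstacle in this plan is the step from pointwise concentration to uniform concentration, and its resolution relies crucially on the fact that the Azuma bound decays at the $n^2$ scale while the metric entropy of $[-1,1]^n$ is only of order $n$.
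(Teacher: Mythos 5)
Your argument is correct and has essentially the same skeleton as the paper's: recognize $\bar X^n_{a_k}-\bar\theta^n_k(\{1\})$ as a bounded martingale difference array with respect to $\{\mathcal{F}^n_k\}$, obtain concentration at exponential rate $n^2$ for each fixed $(a,b)$, and then union-bound over the $(a,b)$-supremum, using that the $n^2$ in the exponent dominates anything of order $n$. The two places where you differ are purely technical. First, you invoke Azuma--Hoeffding, whereas the paper proves an ad hoc martingale Bernstein bound yielding $e^{-n^2 h(\delta)}$ with $h(u)=(1+u)\log(1+u)-u$; both give the needed $e^{-cn^2}$ decay, so this is immaterial. Second, you discretize $[-1,1]^n\times[-1,1]^n$ by an $\epsilon$-net of size $(3/\epsilon)^n$ and pay a $2\epsilon$ slack; the paper instead notes that the quantity inside the supremum in \eqref{discrete-cut-norm} is bilinear in $(a^n,b^n)$, so the supremum over the product of cubes is attained at the extreme points $\{-1,1\}^n\times\{-1,1\}^n$, and a union bound over these finitely many deterministic choices suffices with no approximation loss. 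The extremality reduction is a bit cleaner, but your net argument is also valid since its cardinality is only $e^{O(n)}$. Your passage to the quotient metric via $\sigma=\mathrm{id}$ matches the paper.
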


To prove Lemma \ref{lem:conv}, we use a new version of the Bernstein bound that allows
dependence between the random variables.

\begin{lem}
Let $\{Z_{i},i=1,\ldots,N\}$ be random variables with the following
properties.
\begin{enumerate}
\item $\left\vert Z_{i}\right\vert \leq c<\infty$ a.s.,

\item There is a filtration $\{\mathcal{F}_{i}\}$ such that each $Z_{j}$ for$%
\ 1\leq j<i$ is $\mathcal{F}_{i}$-measurable.
Let $m_{i}=E[Z_{i}|\mathcal{F}_{i}]$. Then for $\delta>0$%
\begin{equation*}
P\left( \frac{1}{N}\sum_{i=1}^{N}(Z_{i}-m_{i})\geq\delta\right) \leq
e^{-Nh(\delta/c)},
\end{equation*}
where $h(u)=(1+u)\log(1+u)-u>0$ for $u>0$.
\end{enumerate}
\end{lem}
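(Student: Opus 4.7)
The plan is to combine the exponential Chebyshev (Chernoff) inequality with an iterated conditioning argument, the heart of the proof being a pointwise Bennett-type moment generating function (MGF) estimate that delivers the full rate $h(\delta/c)$.

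First, for any $\lambda > 0$ I would apply Chernoff:
\[
P\!\left(\tfrac{1}{N}\sum_{i=1}^{N}(Z_i - m_i) \geq \delta\right) \leq e^{-\lambda N \delta}\, E\!\left[\exp\!\left(\lambda \sum_{i=1}^{N}(Z_i - m_i)\right)\right].
\]
The measurability hypothesis (item 2) gives $Z_j - m_j \in \mathcal{F}_N$ for $j < N$ and $m_N \in \mathcal{F}_N$, so the tower property peels off the $N$th factor $E[e^{\lambda(Z_N - m_N)}\,|\,\mathcal{F}_N]$. Iterating $N$ times would reduce the task to establishing the pointwise a.s.\ estimate
\[
E\!\left[e^{\lambda(Z_i - m_i)}\bigm|\mathcal{F}_i\right] \leq \exp\!\bigl(e^{\lambda c} - 1 - \lambda c\bigr),\qquad \lambda > 0,
\]
so that the total MGF is dominated by $\exp\bigl(N(e^{\lambda c} - 1 - \lambda c)\bigr)$. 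Substituting back into the Chernoff bound and optimizing over $\lambda$ (the unique critical point is $\lambda^{\ast} = c^{-1}\log(1 + \delta/c)$) then reproduces the exponent $-N h(\delta/c)$ by a direct algebraic computation using only the definition $h(u) = (1+u)\log(1+u) - u$.

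The main obstacle is the pointwise MGF bound. For the subcase $Z \geq 0$ it is immediate: convexity of $z \mapsto e^{\lambda z}$ on $[0, c]$ gives $e^{\lambda z} \leq 1 + (z/c)(e^{\lambda c} - 1)$; taking $E[\,\cdot\,|\,\mathcal{F}]$, multiplying by $e^{-\lambda m}$, and using $m/c \leq 1$ together with $1 + x \leq e^{x}$ yields the required estimate. Under the two-sided hypothesis $|Z| \leq c$ I would instead appeal to Bauer's maximum principle applied to the linear functional $\nu \mapsto \int e^{\lambda z}\, d\nu$ on the convex set of probability measures on $[-c, c]$ with prescribed mean $m$: differentiating shows the extremizer is supported on $\{-c, c\}$, and the MGF bound reduces to the scalar inequality
\[
\cosh(\lambda c) + \tfrac{m}{c}\sinh(\lambda c) \leq e^{\lambda m}\exp\!\bigl(e^{\lambda c} - 1 - \lambda c\bigr),\quad |m|\leq c,\; \lambda > 0.
\]
I would verify this by noting that, as a function of $m$, the difference of the two sides is strictly convex (its second derivative equals $\lambda^2 e^{\lambda m}\exp(e^{\lambda c} - 1 - \lambda c) > 0$), locating its unique interior critical point $m^{\ast} = \lambda^{-1} - c\coth(\lambda c) \in (-c, 0)$, and confirming positivity at $m^{\ast}$ by showing that the resulting one-variable quantity $e^{\lambda c} - \lambda c(1 + \coth(\lambda c)) - \log(\sinh(\lambda c)/(\lambda c))$ is a convergent power series in $\lambda c$ whose leading nonzero term is $(\lambda c)^3/6 > 0$.
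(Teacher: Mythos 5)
Your overall strategy --- Chernoff, iterated conditioning via the tower property to peel off the conditional MGF of each term, and then optimizing over $\lambda$ to get $\lambda^{\ast}=c^{-1}\log(1+\delta/c)$ and exponent $-Nh(\delta/c)$ --- is exactly the route the paper follows, and this outer layer of the argument is sound.

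The gap is in your verification of the conditional MGF estimate $E[e^{\lambda(Z_{i}-m_{i})}\,|\,\mathcal{F}_{i}]\leq\exp(e^{\lambda c}-1-\lambda c)$ (which the paper itself leaves as ``the same calculation used in the proof of the Bernstein bound''). Your extreme-point reduction to $\cosh(\lambda c)+\frac{m}{c}\sinh(\lambda c)\leq e^{\lambda m}\exp(e^{\lambda c}-1-\lambda c)$ is legitimate, but the final step does not close: showing that $e^{\lambda c}-\lambda c(1+\coth(\lambda c))-\log(\sinh(\lambda c)/(\lambda c))$ has Taylor leading term $(\lambda c)^{3}/6>0$ establishes positivity only near $\lambda c=0$, not for all $\lambda c>0$ --- and since you subsequently optimize over $\lambda\in(0,\infty)$, the inequality is needed globally. (There is also a bookkeeping mismatch: the displayed second derivative $\lambda^{2}e^{\lambda m}\exp(e^{\lambda c}-1-\lambda c)$ is that of the \emph{additive} difference of the two sides, while your critical point $m^{\ast}=\lambda^{-1}-c\coth(\lambda c)$ is that of the \emph{logarithmic} difference; both are convex so this is repairable, but as written the two formulas do not belong to the same function.) A cleaner, complete route avoids extreme points entirely: for $\lambda>0$ the map $z\mapsto(e^{\lambda z}-1-\lambda z)/z^{2}$ is nondecreasing, hence $e^{\lambda z}\leq 1+\lambda z+z^{2}\,\frac{e^{\lambda c}-1-\lambda c}{c^{2}}$ for $z\in[-c,c]$. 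Taking conditional expectations and using $E[Z_{i}^{2}\,|\,\mathcal{F}_{i}]\leq c^{2}$ gives $E[e^{\lambda Z_{i}}\,|\,\mathcal{F}_{i}]\leq 1+\lambda m_{i}+(e^{\lambda c}-1-\lambda c)$; multiplying by $e^{-\lambda m_{i}}$ and applying $1+x\leq e^{x}$ with $x=\lambda m_{i}+e^{\lambda c}-1-\lambda c$ yields the required bound with no case split.
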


\begin{proof}
Since $\left\vert Z_{i}\right\vert \leq c$, the conditional distribution of
$\left\vert Z_{i}-m_{i}\right\vert $ given $\mathcal{F}_{i}$ is also bounded
uniformly by $c$. By straightforward calculations using Taylor's theorem,%
\[
E\left[  e^{\alpha(Z_{i}-m_{i})}|\mathcal{F}_{i}\right]  \leq e^{e^{\alpha
c}-1-\alpha c}\text{ a.s.}%
\]
(the same calculation is used in the proof of the Bernstein bound). For any
$\alpha>0$%
\begin{align*}
P\left(  \frac{1}{N}\sum_{i=1}^{N}(Z_{i}-m_{i})\geq\delta\right)   &
=P\left(  e^{\alpha\sum_{i=1}^{N}(Z_{i}-m_{i})}\geq e^{N\alpha\delta}\right)
\\
&  \leq e^{-N\alpha\delta}Ee^{\alpha\sum_{i=1}^{N}(Z_{i}-m_{i})}.
\end{align*}
We then bound $Ee^{\alpha\sum_{i=1}^{N}(Z_{i}-m_{i})}$ by recurring backwards
from $i=N$:%
\begin{align*}
Ee^{\alpha\sum_{i=1}^{N}(Z_{i}-m_{i})} &  =E\left[  E\left[  \left.
e^{\alpha\sum_{i=1}^{N}(Z_{i}-m_{i})}\right\vert \mathcal{F}_{N}\right]
\right]  \\
&  =E\left[  E\left[  \left.  e^{\alpha(Z_{N}-m_{N})}\right\vert
\mathcal{F}_{N}\right]  e^{\alpha\sum_{i=1}^{N-1}(Z_{i}-m_{i})}\right]  \\
&  \leq Ee^{\alpha\sum_{i=1}^{N-1}(Z_{i}-m_{i})}e^{e^{\alpha c}-1-\alpha c}\\
&  \leq e^{N(e^{\alpha c}-1-\alpha c)}.
\end{align*}
Thus%
\[
P\left(  \frac{1}{N}\sum_{i=1}^{N}(Z_{i}-m_{i})\geq\delta\right)  \leq
e^{-N\alpha\delta}e^{N(e^{\alpha c}-1-\alpha c)}.
\]
Now optimize on $\alpha>0$. Calculus gives
$\delta-ce^{\alpha c}+c=0$,
so
\[
e^{\alpha c}=\frac{c+\delta}{c}\text{ or }\alpha=\frac{1}{c}\log\left(
\frac{c+\delta}{c}\right)  >0.
\]
This choice gives the value%
\begin{align*}
\delta\alpha-e^{\alpha c}+1+\alpha c &  =\frac{\delta}{c}\log\left(
\frac{c+\delta}{c}\right)  -\left(  \frac{c+\delta}{c}\right)  +1+\log\left(
\frac{c+\delta}{c}\right)  \\
&  =-\frac{\delta}{c}+\left(  1+\frac{\delta}{c}\right)  \log\left(
1+\frac{\delta}{c}\right)  \\
&  =h\left(  \frac{\delta}{c}\right)  .
\end{align*}
\end{proof}

\begin{proof}
[Proof of Lemma \ref{lem:conv}] We apply the previous lemma with $Z_{i}$
replaced by $a_{i}^{n}b_{j}^{n}\bar{X}_{ij}^{n}$, $m_{i}$ replaced by
$a_{i}^{n}b_{j}^{n}\bar{\theta}_{k(i,j)}^{n}(\{1\})$, $N$ replaced by $n^{2}$, and $c=1$ to get
\[
P\left(  \frac{1}{n^{2}}\sum_{i,j}^{n}a_{i}^{n}b_{j}^{n}[\bar{X}_{ij}^{n}%
-\bar{\theta}_{k(i,j)}^{n}(\{1\})]\geq\delta\right)  \leq e^{-n^{2}h(\delta)}.
\]
Since $h(\delta)>0$ for $\delta>0$, we can proceed exactly as in a
LLN argument for uncontrolled random graphs that appears in \cite[Lemma~4.1]{GueVer2016}.
Using \eqref{discrete-cut-norm} and that there are $2^n$ choices for $a^n$ and $b^n$, the union bound gives 
\[
P\left( d_{\infty\rightarrow 1}^{n}(\bar{H}^{n},\bar{M}^{n})\geq\delta%
\right)\leq 2^{n+1}e^{-n^2h(\delta)} = 2e^{n\log 2}e^{-n^2 h(\delta)} \rightarrow 0.
\]
\end{proof}

\subsection{Completion of the proof of Theorem \ref{thm.Wrandom}}

By the discussion in \S \ref{sec.represent}, it remains to show 
\begin{equation}
\lim_{n\to\infty}\inf_{\theta^{n}}E\left[ \frac{1}{n^{2}}R\left(
\theta^{n}\left\Vert \mu ^{n}\right. \right) +G(\hat{\bar{H}}^{n})\right]
=\inf_{\hat {V}\in\hat{\iS}}[I(\hat{V})+G(\hat{V})].  \label{eqn:limit}
\end{equation}

We first establish a lower bound. Let $\{\theta^{n}\}$ be any sequence for
which $\theta^{n}$ is a probability measure on $\{0,1\}^{n^{2}}$. Construct $%
\{\bar{H}^{n}\},\{\bar{M}^{n}\},\{\hat{\bar{H}}^{n}\},\{\hat{\bar{M}}^{n}\}$
and $\{W^{n}$\} as in Sections \ref{sec.graphon} and \ref{sec.LLN}, and
note that $d_{{\infty\rightarrow 1}}(W^{n},W)\rightarrow 0$. Since
$(\hat{\iS},\delta_{{\infty\rightarrow 1}})$ is compact, $\{\hat{\bar{H}}^{n}\}$ and $%
\{\hat{\bar{M}}^{n}\}$ are automatically tight. Consider any subsequence
along which $\{\hat{\bar{H}}^{n}\}$ and $\{\hat{\bar{M}}^{n}\}$ converge in
distribution, and label the limits $\hat{\bar{H}}$ and $\hat{\bar{M}}$. By
Lemma \ref{lem:conv}, $\hat{\bar{H}}=\hat{\bar{M}}$. We use Fatou's lemma,
the equations (\ref{eqn:RER1}) and (\ref{eqn:RER2}), 
and the lower semicontinuity of relative entropy
(see the proof of the lower semicontinuity of $I$ in the appendix)
along this subsequence to obtain%
\begin{align*}
& \liminf_{n\rightarrow\infty} E\left[ \frac{1}{n^{2}}R\left( \theta^{n}\left\Vert \mu
^{n}\right. \right) +G(\hat{\bar{H}}^{n})\right] \\
& \quad =\liminf_{n\rightarrow\infty} E\left[ \Upsilon(\bar{M}^{n},W^{n})+G(\hat{\bar{H}}^{n})%
\right] \\
& \quad \geq\liminf_{n\rightarrow\infty} E\left[ \inf_{V\in\hat{\bar{M}}^{n}}\Upsilon(V,W^{n})+G(%
\hat{\bar{H}}^{n})\right] \\
& \quad \geq E\left[ \inf_{V\in\hat{\bar{M}}}\Upsilon(V,W)+G(\hat{\bar{M}})%
\right] \\
& \quad\geq\inf_{\hat{V}\in\hat{\iS}}[I(\hat{V})+G(\hat{V})].
\end{align*}
Since $\{\theta^{n}\}$ is arbitrary, an argument by contradiction then gives%
\begin{equation*}
\liminf_{n\rightarrow\infty}\inf_{\theta^{n}}E\left[ \frac{1}{n^{2}}R\left(
\theta^{n}\left\Vert \mu^{n}\right. \right) +G(\hat{\bar{H}}^{n})\right]
\geq\inf_{\hat{V}\in\hat{\iS}}[I(\hat{V})+G(\hat{V})].
\end{equation*}

Next we consider the reverse bound. Let $\delta>0$ and choose $V^{\ast}\in \iS$
such that 
\begin{equation*}
\lbrack\Upsilon(V^{\ast},W)+G(\hat{V}^{\ast})]\leq\inf_{\hat{V}\in\hat{\iS}}[I(%
\hat{V})+G(\hat{V})]+\delta.
\end{equation*}
Letting $\theta^{\ast,n}$ correspond to $V^{\ast}$ in exactly the same way
that $\mu^{n}$ corresponds $W$, we can apply Lemma \ref{lem:conv} (or the
ordinary LLN) to establish that $\delta_{{\infty\rightarrow 1}}(\hat{\bar{H}}%
^{n},\hat {V}^{\ast})\rightarrow0$ in distribution. We also have by Jensen's
inequality that%
\begin{align*}
\frac{1}{n^{2}}R\left( \theta^{\ast,n}\left\Vert \mu^{n}\right. \right) & =%
\frac{1}{n^{2}}\sum_{k=1}^{n^{2}}\int_{Q_{a_{k}}}R\left( \left\{ \bar{M}^{n}({y}%
),1-\bar{M}^{n}({y})\right\} \left\Vert \left\{ W^{n}({y}),1-W^{n}({y})\right\} \right. \right) dy \\
                       & \leq\int_{\lbrack0,1]^{2}}R\left( \left\{ V^{\ast}({y}),1-V^{\ast }({y})\right\}
            \left\Vert \left\{ W({y}),1-W({y})\right\} \right. \right) dy \\
& =\Upsilon(V^{\ast},W)
\end{align*}
(the reverse bound also holds as $n\rightarrow\infty$ by lower
semicontinuity). Since we have made a particular choice of $\theta^{n}$, it
follows from the dominated convergence theorem that 
\begin{align*}
& \limsup_{n\rightarrow\infty}\inf_{\theta^{n}}E\left[ \frac{1}{n^{2}}%
R\left( \theta^{n}\left\Vert \mu^{n}\right. \right) +G(\hat{\bar{H}}^{n})%
\right] \\
& \quad \leq\limsup_{n\rightarrow\infty}E\left[ \frac{1}{n^{2}}R\left(
\theta^{\ast,n}\left\Vert \mu^{n}\right. \right) +G(\hat{\bar{H}}^{n})\right]
\\
& \quad =[\Upsilon(V^{\ast},W)+G(V^{\ast})] \\
& \quad \leq\inf_{\hat{V}\in\hat{\iS}}[I(\hat{V})+G(\hat{V})]+\delta.
\end{align*}
Letting $\delta\rightarrow0$ establishes the upper bound, and completes the
proof.

\section{Applying the Contraction Principle}
\label{sec.contract}
\setcounter{equation}{0}

In this section, we use Theorem~\ref{thm.Wrandom} and the contraction
principle to prove the LDP for dynamical model \eqref{KM}, \eqref{KM-ic}. To
this end, we need to establish continuous dependence of the solutions of the
corresponding IVPs on a kernel $W$ with respect to the cut norm and on
initial data with respect to the topology of $L^2([0,1])$.

\subsection{Proof of Lemma~\protect\ref{lem.c-dep}}

Let $U$ and $V$ be two measurable functions on $[0,1]^{2}$ with values in $%
[0,1]$ and consider the following IVPs 
\begin{eqnarray}
\partial _{t}u(t,x) &=&f\left( u(t,x),t\right) +\int U(x,y)D\left(
u(t,x),u(t,y)\right) dy,  \label{uKM} \\
u(0,x ) &=&g(x),  \label{uKM-ic}
\end{eqnarray}%
and 
\begin{eqnarray}
\partial _{t}v(t,x) &=&f\left( v(t,x),t\right) +\int V(x,y)D\left(
v(t,x),v(t,y)\right) dy,  \label{vKM} \\
v(0,x ) &=&h(x),  \label{vKM-ic}
\end{eqnarray}%
where $g,h\in L^{\infty }([0,1])$ and $x\in [0,1]$.

\begin{lem}
\label{thm.cont-model-I} For a given $T>0$, we have 
\begin{equation}  \label{cont-model-I}
\left\| u-v\right\|_{C(0,T, L^2([0,1]))}\le C \left(\|U-V\|_{\infty\to 1} +
\|g-h\|_{L^2([0,1])}\right),
\end{equation}
where $C$ depends on $T,$ but not on $U, V$ or $g, h$\footnote{%
Here and below, $C$ stands for a generic constant.}.
\end{lem}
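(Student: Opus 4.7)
The plan is to subtract \eqref{vKM}--\eqref{vKM-ic} from \eqref{uKM}--\eqref{uKM-ic}, derive a pointwise equation for $w := u - v$, and close a Gronwall-type inequality in $L^2_x$. Adding and subtracting $\int U(x,y)\, D(v(t,x),v(t,y))\,dy$ in the difference of the two equations gives
\begin{align*}
\partial_t w(t,x) &= [f(u(t,x),t) - f(v(t,x),t)] \\
&\quad + \int U(x,y)\bigl[D(u(t,x),u(t,y)) - D(v(t,x),v(t,y))\bigr]\,dy \\
&\quad + \int (U-V)(x,y)\, D(v(t,x),v(t,y))\,dy.
\end{align*}
By \eqref{Lip-f}, \eqref{Lip-D}, and $0 \le U \le 1$, the first two lines are bounded pointwise by $(L_f + L_D)|w(t,x)| + L_D \int |w(t,y)|\,dy$. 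All of the dependence on the graphon difference is concentrated in the residual
\[
R(t,x) := \int (U-V)(x,y)\, D(v(t,x),v(t,y))\,dy,
\]
and the problem reduces to bounding $\|R(t,\cdot)\|_{L^2_x}$ by the cut norm $\|U-V\|_{\infty\to 1}$ (up to a harmless power).

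The main obstacle is this last estimate, and it is where the hypothesis $D \in H^s_{\mathrm{loc}}(\R^2)$ with $s > 1$ is essential. Since $f$ and $D$ are bounded by $1$, an a priori estimate keeps $v(t,\cdot)$ in a compact interval $K$ on $[0,T]$, so only $D|_{K\times K}$ matters. The condition $s > 1$ is above the critical Sobolev exponent $d/2 = 1$ in two dimensions, which makes the Fourier expansion of $D$ on a torus enclosing $K\times K$ absolutely convergent: $D(\alpha,\beta) = \sum_{k,\ell} c_{k,\ell}\, e^{i(k\alpha + \ell\beta)}$ with $\sum |c_{k,\ell}| < \infty$. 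Substituting turns the kernel into a convergent superposition of tensor products,
\[
D(v(t,x), v(t,y)) = \sum_{k,\ell} c_{k,\ell}\, e^{ikv(t,x)}\, e^{i\ell v(t,y)},
\]
so that $R(t,x) = \sum_{k,\ell} c_{k,\ell}\, e^{ikv(t,x)} A_\ell(t,x)$ with $A_\ell(t,x) := \int (U-V)(x,y)\, e^{i\ell v(t,y)}\,dy$. For each fixed $\ell$, the function $y \mapsto e^{i\ell v(t,y)}$ has $L^\infty_y$-norm equal to $1$, so \eqref{operator-norm} (applied to real and imaginary parts) gives $\|A_\ell(t,\cdot)\|_{L^1_x} \le 2\|U-V\|_{\infty\to 1}$, while $\|A_\ell(t,\cdot)\|_{L^\infty_x} \le 2$ trivially. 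The interpolation $\|A_\ell\|_{L^2_x}^2 \le \|A_\ell\|_{L^\infty_x}\|A_\ell\|_{L^1_x}$, combined with absolute summability of $\{c_{k,\ell}\}$, delivers the needed control of $\|R(t,\cdot)\|_{L^2_x}$ in terms of $\|U-V\|_{\infty\to 1}$.

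With $R$ controlled, the rest is routine: squaring the pointwise inequality for $w$, integrating in $x$, using Cauchy--Schwarz on $\int |w(t,y)|\,dy \le \|w(t,\cdot)\|_{L^2_x}$, and applying Gronwall on $[0,T]$ yields \eqref{cont-model-I} with $C = C(T)$. The initial data enters through $w(0,\cdot) = g - h$ directly on the right-hand side of the Gronwall inequality and produces the $\|g-h\|_{L^2}$ contribution. The genuinely hard step is the Sobolev-driven tensorization of $D(v(\cdot),v(\cdot))$ above: without $s > 1$, the Fourier coefficients of $D$ need not be absolutely summable, and no estimate on $R$ compatible with the Gronwall loop would be available — this is precisely why the extra regularity assumption on $D$ was imposed at the outset.
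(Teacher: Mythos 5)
Your argument is correct and reaches the same technical crux as the paper's proof, but by a genuinely different route. The paper proceeds via a Galerkin detour: it first proves the cut-norm stability estimate for the piecewise-constant systems \eqref{uKMn}--\eqref{vKMn-ic} (Lemma~\ref{lem.cont-cut}), using an $L^1_x$-Gronwall argument built around extremizing times $t_i$ and signs $\sigma_i$ for each component, together with a Fourier \emph{transform} representation $D(\mathbf{u})=\int_{\R^2} e^{2\pi \iu\, \mathbf{u}\cdot \mathbf{z}}\phi(\mathbf{z})\,d\mathbf{z}$ with $\phi\in L^1(\R^2)$ afforded by $D\in H^s$, $s>1$; it then transfers the bound to the continuum by combining $L^2$-convergence of the projections $U^n\to U$, $V^n\to V$ with contractivity of the $L^2$-projection in cut norm (estimate \eqref{contractive}). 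You instead argue directly at the continuum level, isolating the residual $R(t,x)$ and using a Fourier \emph{series} on a torus enclosing the a priori compact range of $v$; the Sobolev hypothesis $s>1=d/2$ plays exactly the same role for you (absolute summability of the coefficients $c_{k,\ell}$) as for the paper (integrability of $\phi$), and your estimate $\|A_\ell\|_{L^1_x}\le 2\|U-V\|_{\infty\to 1}$ is the continuum analogue of the paper's bound on $I_3$ in \eqref{finish-I3}. Your route avoids the discretization entirely and is cleaner; the paper's route has the side benefit of producing a quantitative stability estimate for the discrete model on its own. One point worth stating explicitly rather than hiding behind \emph{up to a harmless power}: the interpolation $\|A_\ell\|_{L^2_x}^2\le\|A_\ell\|_{L^1_x}\|A_\ell\|_{L^\infty_x}$ costs a square root, so your Gronwall loop actually closes with
\[
\|u-v\|_{C([0,T],L^2([0,1]))}\le C\Bigl(\|U-V\|_{\infty\to 1}^{1/2}+\|g-h\|_{L^2([0,1])}\Bigr),
\]
a H\"older-$1/2$ modulus rather than the Lipschitz form displayed in \eqref{cont-model-I}. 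The paper's own argument incurs the identical square root at Step~4 of the proof of Lemma~\ref{lem.cont-cut}, when passing from the $L^1_x$ quantity $\Delta(T)$ to the squared $L^2$ norm via the uniform bound $M$. This weaker modulus is still ample for Lemma~\ref{lem.c-dep} and the contraction principle, which require only continuity of $F$, but you should state the exponent honestly rather than claim the linear bound as written.
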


We will need the following finite-dimensional (Galerkin) approximation of %
\eqref{uKM}, \eqref{uKM-ic} and \eqref{vKM}, \eqref{vKM-ic}, respectively: 
\begin{eqnarray}
\partial _{t}u^{n}(t,x) &=&f\left( u^{n}(t,x),t\right) +\int
U^{n}(x,y)D\left( u^{n}(t,x),u^{n}(t,y)\right) dy,  \label{uKMn} \\
u^{n}(0,x ) &=&g^{n}(x),  \label{uKMn-ic}
\end{eqnarray}%
and 
\begin{eqnarray}
\partial _{t}v^{n}(t,x) &=&f\left( v^{n}(t,x),t\right) +\int
V^{n}(x,y)D\left( v^{n}(t,x),v^{n}(t,y)\right) dy,  \label{vKMn} \\
v^{n}(0,x ) &=&h^{n}(x),  \label{vKMn-ic}
\end{eqnarray}%
where as in \eqref{step-functions} $U^{n},V^{n}$ and $g^{n},h^{n}$ stand for
the $L^{2}$--projections of $U,V$ and $g,h$ onto finite--dimensional
subspaces ${span}\{{\mathbf{1}}_{Q_{ij}^{n}}:\ (i,j)\in \lbrack n]^{2}\}$
and ${span}\{{\mathbf{1}}_{Q_{i}^{n}}:\ i\in \lbrack n]\}$ respectively:
\begin{equation}  \label{step-UV}
\begin{split}
w^n(x)=\sum_{i=1}^n w^n_{i} {\mathbf{1}}_{Q^n_{i}}(x), &\quad
w^n_{i}=n\int_{Q^n_{i}} w(x)~dx,\quad w\in\{g, h\}, \\
W^n({x,y})=\sum_{i,j=1}^n W^n_{ij} {\mathbf{1}}_{Q^n_{ij}}({x,y}), &\quad
W^n_{ij}=n^2\int_{Q^n_{ij}} W(x,y)d{x},\quad W\in\{U, V\}.
\end{split}%
\end{equation}

For solutions of the finite-dimensional models, we will need the following
lemma.

\begin{lem}
\label{lem.cont-cut} 
\begin{equation}  \label{cont-cut}
\|u^n-v^n\|_{C(0,T; L^2([0,1]))} \le C \left(\|U^n-V^n\|_{\infty\rightarrow
1}+\|g^n-h^n\|_{L^2([0,1])}\right),
\end{equation}
where $C$ is independent of $n$.
\end{lem}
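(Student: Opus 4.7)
My strategy is an $L^2$ energy estimate for the difference $e^n(t,x) := u^n(t,x) - v^n(t,x)$, closed by Gronwall's inequality, where the Sobolev regularity of $D$ is used to handle the "source" term generated by $U^n-V^n$ linearly in the cut norm. Subtracting \eqref{vKMn} from \eqref{uKMn} gives
\[
\partial_t e^n = \bigl[f(u^n,t)-f(v^n,t)\bigr] + A_1(t,x) + A_2(t,x),
\]
with $A_1 := \int U^n(x,y)\bigl[D(u^n(t,x),u^n(t,y))-D(v^n(t,x),v^n(t,y))\bigr]\,dy$ and $A_2 := \int (U^n-V^n)(x,y)D(v^n(t,x),v^n(t,y))\,dy$. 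Multiplying by $e^n$, integrating, and applying \eqref{Lip-f}, \eqref{Lip-D}, \eqref{D-bound}, and $0\le U^n\le 1$ yields
\[
\tfrac{1}{2}\tfrac{d}{dt}\|e^n(t)\|_{L^2}^2 \le (L_f+2L_D)\|e^n(t)\|_{L^2}^2 + \int e^n(t,x)A_2(t,x)\,dx.
\]
Independently, from $|f|,|D|\le 1$ together with Gronwall applied pointwise to the ODE for $u_i^n$, one obtains an a priori $L^\infty$ bound $\|e^n\|_\infty\le M$ depending only on $T$ and $\|g\|_\infty,\|h\|_\infty$, which will be used to control test functions below.

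\textbf{Key step: the linear cut-norm estimate on the cross term.} The obstacle is that the integrand $e^n(t,x)D(v^n(t,x),v^n(t,y))$ in $\int\!\!\int e^n(x)(U^n-V^n)(x,y)D(v^n(x),v^n(y))\,dx\,dy$ is not of tensor form $a(x)b(y)$, so the cut-norm \eqref{cut-norm} does not apply directly. Here I invoke the assumption $D\in H^s_{\mathrm{loc}}(\mathbb R^2)$ with $s>1$: restricting to $[-M,M]^2$ and expanding in a Fourier basis $\{\phi_k\}_{k\in\mathbb Z}$ of $L^2([-M,M])$ with $|\phi_k|\equiv 1$,
\[
D(u,v) = \sum_{k,\ell\in\mathbb Z} c_{k\ell}\,\phi_k(u)\phi_\ell(v),
\qquad \sum_{k,\ell}|c_{k\ell}| \le \|D\|_{H^s}\Bigl(\sum_{k,\ell}(1+k^2+\ell^2)^{-s}\Bigr)^{\!1/2} < \infty,
\]
where the sum on the right converges precisely because $s>1$ on $\mathbb Z^2$. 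Substituting and writing $\alpha_k(x):=e^n(t,x)\phi_k(v^n(t,x))$, $\beta_\ell(y):=\phi_\ell(v^n(t,y))$, the cross term becomes $\sum_{k,\ell} c_{k\ell}\int\!\!\int \alpha_k(x)(U^n-V^n)(x,y)\beta_\ell(y)\,dx\,dy$. Since $\|\alpha_k\|_\infty\le\|e^n\|_\infty\le M$ and $\|\beta_\ell\|_\infty\le 1$, the definition of $\|\cdot\|_{\infty\to 1}$ (with a factor 4 to split real and imaginary parts) yields
\[
\Bigl|\!\int\!\!\int e^n(x)(U^n\!-\!V^n)(x,y)D(v^n(x),v^n(y))\,dx\,dy\Bigr| \le 4M\Bigl(\!\sum_{k,\ell}|c_{k\ell}|\!\Bigr)\|U^n\!-\!V^n\|_{\infty\to 1} =: C_D\|U^n\!-\!V^n\|_{\infty\to 1}.
\]

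\textbf{Closing by Gronwall.} Combining the estimates produces the scalar ODI
\[
\tfrac{d}{dt}\|e^n(t)\|_{L^2}^2 \le 2(L_f+2L_D)\|e^n(t)\|_{L^2}^2 + 2C_D\|U^n-V^n\|_{\infty\to 1},
\]
and Gronwall over $[0,T]$ gives $\|e^n(t)\|_{L^2}^2 \le e^{c_1T}\bigl(\|g^n-h^n\|_{L^2}^2 + c_2T\,\|U^n-V^n\|_{\infty\to 1}\bigr)$. To pass from this quadratic bound to the stated linear one, I would use the a priori uniform-in-$n$ bound $\|e^n\|_{C([0,T],L^2)}\le C_0$ to restrict to the regime where $\|U^n-V^n\|_{\infty\to 1}$ is bounded; on that regime $\|U^n-V^n\|_{\infty\to 1}\le \|U^n-V^n\|_{\infty\to 1}^{1/2}\cdot\|U^n-V^n\|_{\infty\to 1}^{1/2}$ may be combined with an interpolation between the $L^1$-analog of the cross-term bound (which is linear in $\|U^n-V^n\|_{\infty\to 1}$) and the a priori $L^\infty$ bound on $e^n$ to upgrade the modulus. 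The constant $C$ depends only on $T,L_f,L_D,M,\|D\|_{H^s}$, hence is independent of $n$.

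\textbf{Main obstacle.} The genuine difficulty is exactly the combination \emph{linear} dependence on $\|U^n-V^n\|_{\infty\to 1}$ together with the \emph{$L^2$-in-$x$} norm on the left. The Fourier expansion handles the non-factorization of the integrand cleanly when testing against $e^n$ in $L^1$-type pairings, but the passage to an $L^2$-bound that scales linearly (rather than as $\|U^n-V^n\|_{\infty\to 1}^{1/2}$) is delicate; the use of the $L^\infty$ a priori bound on $u^n,v^n$ together with the $H^s_{\mathrm{loc}}$ assumption with $s>1$ is essential, since without either one the separation-of-variables argument collapses and the cut-norm cannot be invoked.
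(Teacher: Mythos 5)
Your key idea for the cross term is the same as the paper's: decompose $D$ via a Fourier representation and exploit $D\in H^s$, $s>1$, to obtain absolutely summable coefficients, so that the non-tensor integrand separates into a sum of rank-one pieces each controllable by $\|U^n-V^n\|_{\infty\to 1}$. The paper realizes this with the Fourier \emph{transform} on $\R^2$ after first multiplying $D$ by a smooth cutoff $\xi_M$ that equals $1$ on $B(0,\sqrt{2}M)$ (so that the modified $D$ lies in $H^s(\R^2)$ with $\phi=\widehat{D}\in L^1$); you use a Fourier \emph{series} on $[-M,M]^2$. Your version needs a small repair: restricting $D$ to $[-M,M]^2$ does not by itself produce a function in $H^s$ of the torus (the periodic extension is generically discontinuous), so you too must first cut off $D$ on a slightly larger box, exactly as the paper does. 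With that fix the coefficient bound $\sum|c_{k\ell}|<\infty$ via Cauchy–Schwarz and $s>1$ is correct.

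The genuine difference, and where your proof has a gap, is the bookkeeping that follows. Your $L^2$ energy estimate inevitably produces a bound on $\|u^n-v^n\|_{C(0,T;L^2)}^{2}$ that is linear in $\|U^n-V^n\|_{\infty\to 1}$, hence a H\"older-$1/2$ bound on the norm itself, and your proposed ``interpolation'' repair is not an argument: the inequality $\|U^n-V^n\|_{\infty\to 1}\le\|U^n-V^n\|_{\infty\to 1}^{1/2}\cdot\|U^n-V^n\|_{\infty\to 1}^{1/2}$ is a tautology and does not upgrade the exponent. The paper sidesteps the difficulty by \emph{not} doing an $L^2$ pairing at all. It instead estimates the $L^1$-type quantity $\Delta(T)=\int_0^1\sup_{s\le T}|u^n(s,x)-v^n(s,x)|\,dx$, choosing times $t_i$ and signs $\sigma_i$ so that the sup is attained pointwise in $x$; then the cut-norm/Fourier step lands directly as a term that is linear in $\|U^n-V^n\|_{\infty\to 1}$ inside Gronwall's inequality for $\Delta$, giving a genuinely linear bound on $\Delta(T)$. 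Only at the very end does the paper invoke the a priori $L^\infty$ bound to write $\sup_t\int(u^n-v^n)^2\,dx\le 2M\,\Delta(T)$. You should note, however, that even the paper's final display bounds $\|u^n-v^n\|_{C(0,T;L^2)}^{2}$ linearly in $\|U^n-V^n\|_{\infty\to 1}$, so the Lipschitz form of Lemma~\ref{lem.cont-cut} is a mild overstatement of what is actually shown by either route; the H\"older-$1/2$ estimate is what comes out, and it is all that is needed for the continuity of $F$ and hence for the contraction principle. So your proof, once the cutoff repair is made and the last paragraph is replaced by an honest statement of the H\"older bound (or by the paper's $L^1$/pointwise-sup device), is sound and captures the essential mechanism.
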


The proof of Lemma~\ref{lem.cont-cut} will be presented after the proof of
Lemma~\ref{thm.cont-model-I}.

\begin{proof}[Proof of Lemma~\ref{thm.cont-model-I}.]
\begin{enumerate}
\item First, we show that
\be\lbl{first}        \|u-u^n\|_{C([0,T], L^2([0,1]))} \le C \left(\|U-U^n\|_{L^2([0,1]^2)} +\|g-g^n\|_{L^2([0,1])}\right).
\ee
To this end, let $\xi:=u-u^n,$ subtract \eqref{uKMn} from \eqref{uKM}, multiply 
the resulting equation by $\xi$ and integrate over $[0,1]$ with respect to $x$:
\be\lbl{subtract}
\begin{split}
{1\over 2} {d\over dt} \int \xi(t,x)^2dx &= \int \left[ f(u(t,x),t)-f(u^n(t,x))\right] \xi(t,x) dx\\
&\quad +\int_{[0,1]^2} U(x,y) \left\{ D(u(t,x), u(t,y)) - D(u^n(t,x), u^n(t,y))\right\} \xi(t,x) dxdy\\
&\quad + \int_{[0,1]^2} \left( U(x,y)-U^n(x,y)\right) D(u^n(t,x), u^n(t,y)) \xi(t,x) dxdy.
\end{split}
\ee
Using \eqref{Lip-f}, \eqref{Lip-D}, \eqref{D-bound},  and $|U|\le 1,$ 
% and some elementary inequalities, 
from \eqref{subtract}, we obtain
\begin{equation*}
\begin{split}
{1\over 2} {d\over dt} \int \xi(t,x)^2dx &\le \left(L_f +2L_D\right)\int \xi(t,x)^2 dx\\
&\quad + \int_{[0,1]^2} \left| U(x,y)-U^n(x,y)\right|  |\xi(t,x)| dxdy.
\end{split}
\end{equation*}
By Young's inequality, we further have
\begin{equation}\lbl{preGron}
{d\over dt} \int \xi(t,x)^2dx \le 2\left(L_f +2L_D+1/2\right)\int \xi(t,x)^2 dx+ 
\int_{[0,1]^2} \left| U(x,y)-U^n(x,y)\right|^2 dxdy.
\end{equation}
We obtain \eqref{first} from \eqref{preGron} via Gronwall's inequality.
Similarly, we have
\be\lbl{first-a}
\|v-v^n\|_{C([0,T], L^2([0,1]))} \le C \left(\|V-V^n\|_{L^2([0,1]^2)}+\|h-h^n\|_{L^2([0,1])}\right).
\ee

\item
Using contractivity of the $L^2$--projection operator with
respect to the cut norm (cf.~\cite{LovGraphLim12}), 
$
\|U^n\|_\Box\le \|U\|_\Box,
$
$
\|V^n\|_\Box\le \|V\|_\Box,
$
and \eqref{cut-to-infty}, we have
\be\lbl{contractive}
\|U^n-V^n\|_{\infty\to 1} \le 4 \|U-V\|_{\infty\to 1}.
\ee
This and Lemma~\ref{lem.cont-cut} imply
\be\lbl{finite-d}
\|u^n-v^n\|_{C([0,T], L^2([0,1]))}\le C \left( \|U-V\|_{\infty\to 1} +\|g^n-h^n\|_{L^2([0,1])}\right).
\ee
\item
From \eqref{first}, \eqref{first-a}, and \eqref{finite-d}, by the triangle inequality, we 
have
\begin{equation*}
  \begin{split}
\|u-v\|_{C([0,T], L^2([0,1]))} & \le C \left(\|U-V\|_{\infty\to 1}+\|U-U^n\|_{L^2([0,1])}+
  \|V-V^n\|_{L^2([0,1])}\right.\\
  & \left.\quad  +\|g-g^n\|_{L^2([0,1])} +\|h-h^n\|_{L^2([0,1])}+\|g^n-h^n\|_{L^2([0,1])}\right).
\end{split}
\end{equation*}
We obtain \eqref{cont-model-I} after sending $n\to\infty$. 
\end{enumerate}
\end{proof}

It remains to prove Lemma~\ref{lem.cont-cut}. We are following the lines of
the proof of Proposition~2 in \cite{OliRei19}. 
\begin{proof}[Proof of Lemma~\ref{lem.cont-cut}.]
  \begin{enumerate}
  \item
    Using the bounds on $U,V, f, D,$ 
    $$
    |U|\le 1, \; |V|\le 1,\; |f|\le 1, \; |D|\le 1,
    $$
    and the initial data
    $$
    \max\{ \|u^n(0,\cdot)\|_{L^\infty([0,1])},
    \|v^n(0,\cdot)\|_{L^\infty([0,1])}\}\le
 \max\{\|g\|_{L^\infty([0,1])},   \|h\|_{L^\infty([0,1])}\},
    $$
    it follows from \eqref{uKMn}-\eqref{vKMn-ic} that
    \be\lbl{max}
    \max_{(t,x)\in [0,T]\times [0,1]} |w^n(t,x)|\le M, \quad w^n\in \{u^n, v^n\}
    \ee
    for some $M\in (0,\infty)$ independent of $n$.
  \item
    Since $D$ is a Lipschitz continuous bounded function and $D\in H^s_{\mathrm{loc}}(\R^2), s>1,$ there is
    a Lipschitz continuous bounded function $D_M\in H^s(\R^2), s>1,$ which coincides with $D$ on the
    ball of radius $\sqrt{2}M$ centered at the origin, $B(0, \sqrt{2}M)$. Indeed,
    as $D_M$ one can take $D_M(\mathbf{x})=\xi_M(\mathbf{x}) D(\mathbf{x})$, where $\xi_M$
    is an infinitely differentiable bump function equal to $1$ on $B(0, \sqrt{2}M)$ and equal to $0$ outside of
    $B(0, 2\sqrt{2}M)$. In view of \eqref{max}, replacing $D$ with $D_M$ is not going to affect the solutions
    of the IVPs \eqref{uKMn}, \eqref{uKMn-ic} and \eqref{vKMn}, \eqref{vKMn-ic} on $[0,T]$. Thus, without loss
    of generality for the remainder of the proof we assume that $D\in H^s(\R^2), s>1$. In this case, letting
    $\phi$ be the Fourier transform of $D$, we have $\phi\in L^1(\R^2)$ and $D$ can be written as
     \be\lbl{Fourier}
     D({u})=\int_{\R^2} e^{2\pi \iu {u}\cdot z} \phi({z})dz,
     \quad {u}=(u_1, u_2), \; {z}:=(z_1, z_2), \; {u}\cdot{z}=u_1z_1+u_2z_2.
     \ee
   \item Recall that $U^n$ and $V^n$ are step functions (cf.~\eqref{step-UV}). Likewise,
  the solutions of the finite--dimensional IVPs \eqref{uKMn}, \eqref{uKMn-ic}
  can be written as
  \be\lbl{step-uv}
w^n(t,x)=\sum_{i=1}^n w^n_{i}(t)\1_{Q^n_i}(x),\quad w\in\{u, v\}.
\ee
Denote
\be\lbl{delta}
\delta^n_i(t):=u^n_i(t)-v^n_i(t),\; i\in [n].
\ee
By subtracting \eqref{vKMn} from \eqref{uKMn}, we have
\be\lbl{KM-aKM}
\begin{split}
\delta^n_i(s) = \delta^n_i(0)+ \int_0^s &  \left\{  n^{-1} \sum_{j=1}^n 
   U^n_{ij} \left( D\left(u^n_i(\tau), u^n_j(\tau)\right)-D\left(v^n_i(\tau), v^n_j(\tau)\right)
\right)\right.\\
        &\quad +\left[ f\left( u^n_i (\tau), \tau\right)-
f\left( v^n_i(\tau), \tau \right) \right]\\
&\quad +\left. n^{-1}\sum_{j=1}^n \left( U^n_{ij}-V^n_{ij}\right) D\left( v^n_i(\tau), v^n_j(\tau)\right)
\right\}
d\tau,
\end{split}
\ee
where $U^n$ and $V^n$ were defined in \eqref{step-UV}.

By continuity, there are $0\le t_i\le T$ and $\sigma_i\in\{1,-1\}$ such that 
\be\lbl{continuity}
\sup_{s\in [0,T]} \left|\delta^n_i (s)\right|=\sigma_i\delta^n_i(t_i), \; i\in [n].
\ee
Thus,
\be\lbl{estimate-Delta}
\begin{split}
  \Delta(T)&:= \int \sup_{s\in [0,T]} \left| u^n(s,x)-v^n(s,x)\right| dx=
  n^{-1} \sum_{i=1}^n \sigma_i \delta^n_i(t_i)\\
  &=n^{-1}\sum_{i=1}^n \sigma_i \delta^n_i(0)\\
&\quad + \int_0^T n^{-2} \sum_{i,j=1}^n  \sigma_i U^n_{ij} 
\left( D\left(u^n_i(\tau), u^n_j(\tau)\right)-D\left(u^n_i(\tau), u^n_j(\tau) \right) \right)
\1_{[0,t_i]}(\tau) d\tau\\
&\quad +\int_0^T n^{-1} \sum_{i=1}^n  \sigma_i \left[ f\left( u^n_i (\tau), \tau\right)-
f\left( v^n_i(\tau), \tau\right) \right] \1_{[0,t_i]}(\tau) d\tau\\
&\quad + \int_0^T n^{-2} \sum_{i,j=1}^n  \sigma_i 
\left( U^n_{ij}-V^n_{ij} \right) D\left( v^n_i(\tau), v^n_j(\tau)\right) \1_{[0,t_i]}(\tau) d\tau\\
&=n^{-1}\sum_{i=1}^n \sigma_i \delta^n_i(0) +I_1+I_2+I_3.
\end{split}
\ee

Using Lipschitz continuity of $D$ and $f$ (cf.~\eqref{Lip-D} and \eqref{Lip-f})
and the fact that $|U^n_{ij}|\le 1,$ we have
\be\lbl{I1+I2}
I_1 + I_2\le \int_0^T \left(2L_D+L_f\right)  \Delta(\tau) d\tau.
\ee
On the other hand, using \eqref{Fourier}, 
we estimate
\be\lbl{I3}
I_3 \le n^{-2} \int_0^T \int_{\R^2}  \left|\sum_{i,j}^n \left(V^n_{ij}-U^n_{ij}\right) 
e^{2\pi\iu u^n_i(\tau)z_1} e^{2\pi\iu v^n_j(\tau) z_2} \right| \phi({z}) d{z} d\tau
\ee
Decomposing $e^{2\pi\iu u^n_i(\tau)z_1}$ and $e^{2\pi\iu v^n_j(\tau)z_2}$ into sums 
of real and imaginary parts, each not exceeding $1$ in absolute value, we have
\be\lbl{finish-I3}
I_3\le  4T \|U^n-V^n\|_{\infty\to 1} \|\phi\|_{L^1(\R^2)}.
\ee
Combining \eqref{estimate-Delta}, \eqref{I1+I2}, and \eqref{finish-I3}, and 
using Gronwall's inequality and the definition of $\delta^n_i(0)$,  we obtain 
\be\lbl{T-estimate}
\begin{split}
\int \sup_{s\in [0,T]} \left| u^n(s,x)-v^n(s,x)\right| dx
& \le  e^{(2L_D+L_f)T} \left( 4T \|\phi\|_{L^1(\R^2)} \|U^n-V^n\|_{\infty\to 1} \right.\\
  &\qquad + \left. \|g^n-h^n\|_{L^2([0,1])}\right).
\end{split}
\ee
\item Using \eqref{T-estimate} and \eqref{max}, we have
  \begin{equation*}
    \begin{split}
  \sup_{t\in [0,T]}\int \left(u^n(t,x)-v^n(t,x)\right)^2 dx &\le 2M \sup_{t\in [0,T]}
  \int \left|u^n(t,x)-v^n(t,x)\right| dx\\
  & \le  2M
  \int  \sup_{t\in [0,T]} \left|u^n(t,x)-v^n(t,x)\right| dx\\
 & \le 
 2M e^{(2L_D+L_f)T}\left(4T \|\phi\|_{L^1(\R^2)} \|U^n-V^n\|_{\infty\to 1}
   +\|g_n-h_n\|_{L^2([0,1])}\right).
\end{split}
\end{equation*}
\end{enumerate}
\end{proof}

Finally, given $\widehat{(U,g)}$ and $\widehat{(V,h)}$,
% from $\mathcal{\hat X}$,
fix two representatives $(U,g) \in \widehat{(U,g)}$ and $%
(V,h) \in \widehat{(V,h)}$. Denote the corresponding solutions of the IVP and
their equivalence classes by $u,v$ and $\hat u, \hat v$ respectively.\ Using
Lemma~\ref{thm.cont-model-I}, we have 
\begin{equation*}
\begin{split}
d_{\mathcal{\hat Y}}(\hat u, \hat v) & =\inf_{\sigma} \|
u_{\sigma}-v\|_{C(0,T; L^2([0,1]))} \\
& \le C \inf_{\sigma} \left\{
\|U_{\sigma}-V\|_{\infty\to 1} +
\|g_{\sigma}-h\|_{L^2([0,1])}\right\} \\
& \le C d_{\mathcal{\hat{X}}} \left( \widehat{(U,g)}, \widehat{(V,h)}\right).
\end{split}%
\end{equation*}
This shows the continuity of $F:\mathcal{\hat X}\to\mathcal{\hat Y}$
needed for the application of the contraction principle to the dynamical
model at hand.

\section{Generalizations}
\label{sec.generalize}
\setcounter{equation}{0}
In this section, we describe two generalizations of the analysis in
the main part of the paper. First, we extend the LDP to cover the
original model \eqref{KM} with random parameters.
Second, we discuss the case of the dynamical model on a sequence
of sparse graphs. The analysis in the previous sections suggests a natural
extension of the LDP derived for the dense networks to their sparse
counterparts. To explain this extension, we formulate the dynamical
model on a convergent sequence of sparse W-random graphs \cite{BCCZ19}. Next, we prove an
LDP for sparse W-random graphs in the space of nonnegative finite measures
with the vague topology. We conjecture that this LDP can be upgraded
to the LDP with the same rate function in the space of graphons with 
the cut norm topology, which would afford further application to the
dynamical problem. We support this conjecture by demonstrating the
key estimate needed for the proof of the lower large deviations bound
and outlining the steps needed for the proof of the upper bound.
The latter however leads to new technical difficulties, which will be
addressed elsewhere.

\subsection{Random parameters}

We now revisit \eqref{frame}, \eqref{frame-ic} to address the dependence of $f$ on random parameters.
To this end, we rewrite \eqref{frame}, \eqref{frame-ic} as follows
\begin{equation}\label{newframe}
  \begin{split}
    \dot u^n_i &= f(u^n_i,\eta^n_i,t) +\frac{1}{n} \sum_{j=1}^n X^n_{ij} D( u^n_i, u^n_j),\;\dot \eta^n_i = 0,\\
    u^n_i(0)&=g_i^n,\;
    \eta_i^n(0)=\xi^n_i,\quad i\in [n],
  \end{split}
\end{equation}
where $\xi^n_i\in\R^d$ is a random array. Thus, the random parameters can be treated in the same
way as the initial data.

We formulate the assumptions on $\{\xi^n_i\}$ in analogy to how this was done for $\{ g_i^n \}$ in
Section~\ref{sec.model}. Specifically, let $\{  J^n \}$ be a sequence of iid ${\B}^d$-valued random variables
independent from $\{X_{ij}^n\}$ and $\{G^n \}$. Then
\begin{equation*}
\xi_{i}^{n}=n\int_{Q_{i}^{n}} J^{n}(y)dy,\quad i\in [n] 
\end{equation*}
and
$$
\bar{J}^{n}(x)=\xi_{i}^{n}\;\mbox{for}\;x\in Q_{i}^{n}.
$$
In analogy to Assumption~\ref{assum:LDIC}, we impose the following.
\begin{assume}
  \label{assum:LDIC+}
  $\{\bar{J}^{n}\}$ satisfies the LDP in ${\B}^d$ with the rate function $L$ and scaling sequence $n^2$.
\end{assume}

All other assumptions on the data in \eqref{newframe} remain the same with one exception:
the Lipschitz condition \eqref{Lip-f} is replaced by the condition
\begin{equation*}
  |f(u,\xi, t)-f(u^\prime, \xi^\prime,t)|\leq L_{f}\left(|u-u^\prime|+|\xi-\xi^\prime|\right)\
  \quad u, u^\prime \in {\mathbb{R}},\;\xi, \xi^\prime\in {\mathbb{R}^d},\; t\ge 0.
\end{equation*}

The continuum limit for \eqref{newframe} is given by
\begin{equation}\label{cframe}
  \begin{split}
    \partial_t u(t,x)& =f(u,j(x),t) + \int W(x,y)D\left( u(t,x), u(t,y)\right) dy,\\
    u(0,x)&=g(x),
  \end{split}
  \end{equation}
  where $g\in\B$ and $j\in\B^d$. The initial value problem \eqref{cframe} has a unique solution
  $u\in \mathcal{Y}$. (Recall that $\mathcal{Y}$ stands for $C([0,T], L^2([0,1]))$.)
  Furthermore, we can formulate a counterpart of Lemma~\ref{lem.c-dep} for the model at hand.
  Specifically, let
  \begin{equation*}
(W,g, j)\sim (W^{\prime },g^{\prime }, j^\prime)\quad \mbox{if}\quad W^{\prime
}=W_{\sigma },\;g^{\prime }=g_{\sigma },\;\&\; j^\prime=j_\sigma\quad \mbox{for some}\quad \sigma
\in \mathcal{P}.
\end{equation*}%
and redefine $\mathcal{X}:=\iS\times \B\times \B^d$ and $\hat{\mathcal{X}}=\mathcal{X}/^{\sim }$.
Let
$$
F:\; \mathcal{X}\ni (W,g,j)\mapsto u \in \mathcal{Y}
$$ denote the
map between the data and the solution of the initial value problem \eqref{newframe}.
As before,
$$
F(W_\sigma, g_\sigma, j_\sigma)=u_\sigma\quad \forall \sigma\in \mathcal{P}.
$$
Thus, $F: \hat{\mathcal{X}}\to \hat{\mathcal{Y}}$ is well defined. The analysis of Section~\ref{sec.contract}
with straightforward modifications then implies that that $F$ is a continuous mapping. Here, the metric in
$\hat{\mathcal{Y}}$ is unchanged and
\begin{equation}
d_{\mathcal{\hat{X}}}\left( \widehat{(U,g, j)},\widehat{(U^\prime,g^\prime, j^\prime)}\right)
=\inf_{\sigma}\left\{ \Vert U_{\sigma}-U^\prime\Vert _{\infty \rightarrow 1}+
  \Vert g_{\sigma}-g^\prime\Vert _{\B}
  +\Vert j_{\sigma}-j^\prime\Vert _{\B^d}
\right\} ,  \label{d-Xhat}
\end{equation}%
where $(U,g, j)\in \widehat{(U,g,j)}$ and $(U^\prime,g^\prime, j^\prime)\in \widehat{(U^\prime,g^\prime,j^\prime)}$ are
arbitrary representatives.

This leads to the following.
\begin{thm}
\label{thm.model-II} For $W\in \iS$ let $(H^{n},g^{n}, J^{n})$ be
a sequence of random graphons and random initial data and parameters
and let Assumptions \ref{assum:LDIC}  and  \ref{assum:LDIC+} hold.
Denote by $u^{n}$ the corresponding solutions of %
\eqref{newframe}. Then $\{\hat{u}^{n}\}$ satisfies an LDP on $%
\mathcal{\hat{X}}$ with scaling sequence $n^2$ and the rate function
\begin{equation*}
\mathbf{J}(\hat{u})=\inf \{I(\hat{W})+K(\hat{g}) + L(\hat{j}):\quad \widehat{(W,g,j)}=F^{-1}(\hat{u})\}.
\end{equation*}
\end{thm}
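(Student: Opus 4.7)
The plan is to apply the contraction principle to a continuous map $F:\hat{\mathcal X}\to\hat{\mathcal Y}$ extending Lemma~\ref{lem.c-dep}, fed by a joint LDP for the full data $\widehat{(H^n,\bar G^n,\bar J^n)}$ on $\hat{\mathcal X}=(\iS\times\B\times\B^d)/\!\sim$ with scaling $n^2$ and rate function $I(\hat W)+K(\hat g)+L(\hat j)$. The argument thus parallels the proof of Theorem~\ref{thm.model-I}, with two additional ingredients to supply.

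First I would extend Lemma~\ref{thm.cont-model-I} to account for the parameter $j$. Running the energy estimate $\tfrac12\partial_t\int\xi^2$ for $\xi=u-v$ produces exactly one new contribution, $\int\bigl(f(u,j,t)-f(v,j',t)\bigr)\xi\,dx$, which by the modified Lipschitz hypothesis is bounded by $L_f\int(|\xi|+|j-j'|)|\xi|\,dx$. Young's inequality and Gronwall then upgrade \eqref{cont-model-I} to
\[
\|u-v\|_{C([0,T],L^2)}\le C\bigl(\|U-V\|_{\infty\to 1}+\|g-h\|_{L^2}+\|j-j'\|_{L^2}\bigr),
\]
and an infimum over the simultaneous action of $\sigma\in\mathcal P$ yields the continuity of $F:\hat{\mathcal X}\to\hat{\mathcal Y}$ with respect to the metric on $\hat{\mathcal X}$ given in the revised \eqref{d-Xhat}.

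Next I would assemble the joint LDP. The sequences $\{H^n\}$, $\{\bar G^n\}$, $\{\bar J^n\}$ are mutually independent, so Theorem~\ref{thm.Wrandom} and Assumptions~\ref{assum:LDIC}, \ref{assum:LDIC+}, all at the common speed $n^2$, yield a product LDP for $(\hat H^n,\bar G^n,\bar J^n)$ on $\hat\iS\times\B\times\B^d$ with rate $I(\hat W)+K(g)+L(j)$. Transporting this to $\hat{\mathcal X}$ by minimizing over the simultaneous permutation produces the sum rate $I(\hat W)+K(\hat g)+L(\hat j)$. The contraction principle applied to the continuous map $F$ then produces the LDP for $\{\hat u^n\}$ with $\mathbf J(\hat u)=\inf\{I(\hat W)+K(\hat g)+L(\hat j):\widehat{(W,g,j)}\in F^{-1}(\hat u)\}$.

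The main obstacle I anticipate is the transfer from the product LDP on $\hat\iS\times\B\times\B^d$ to the joint LDP on $\hat{\mathcal X}$: choosing a representative $W$ of $\hat W$ and forming $\widehat{(W,g,j)}$ is not a well-defined map, so one cannot simply invoke the contraction principle pointwise. The cleanest resolution is to redo the weak convergence argument from the proof of Theorem~\ref{thm.Wrandom} directly for the enlarged controlled process on $\hat{\mathcal X}$: the relative entropy on the product reference measure decomposes additively by independence of the three components, the Lemma~\ref{lem:conv}-type concentration in the cut metric for $\bar M^n$ goes through unchanged, and the analogous LLN-plus-LDP bounds for $\bar G^n$ and $\bar J^n$ follow from tightness in the respective Polish spaces together with Assumptions~\ref{assum:LDIC} and \ref{assum:LDIC+}.
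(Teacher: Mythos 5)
Your high-level plan — extend the Gronwall/continuity estimate of Lemma~\ref{thm.cont-model-I} to carry the extra $\|j-j'\|_{L^2}$ term, verify $F:\hat{\mathcal X}\to\hat{\mathcal Y}$ is continuous in the metric \eqref{d-Xhat}, and then contract the joint LDP for the input data — is exactly what the paper does (the paper's entire proof of this theorem is the line ``The analysis of Section~\ref{sec.contract} with straightforward modifications then implies that $F$ is a continuous mapping... This leads to the following''). The computation you sketch for the additional $f(u,j,t)-f(v,j',t)$ term under the modified Lipschitz hypothesis is the right (and only new) ingredient in that step.

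Where you go beyond the paper is in flagging the passage from the product LDP on $\hat{\iS}\times\B\times\B^d$ to the LDP for $\widehat{(H^n,\bar G^n,\bar J^n)}$ on $\hat{\mathcal X}$. That concern is legitimate: the quotient on $\hat{\mathcal X}$ is under the \emph{simultaneous} action of $\sigma\in\mathcal P$ on all three coordinates, so the natural map $\iS\times\B\times\B^d\to\hat{\mathcal X}$ does not factor through $\hat\iS\times\B\times\B^d$ (if $W'=W_\sigma$ then $\widehat{(W',g,j)}=\widehat{(W,g_{\sigma^{-1}},j_{\sigma^{-1}})}$, which is a different equivalence class unless $g,j$ are $\sigma$-invariant). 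Consequently $F\circ\pi$ is also not a well-defined map on $\hat\iS\times\B\times\B^d$, so the contraction principle cannot be applied literally to the product LDP. The paper does not comment on this; it implicitly treats the step as routine. Your proposed remedy — carry out the weak-convergence/Laplace-principle argument directly for the enlarged controlled triple, exploiting independence to split the relative entropy, reusing the Bernstein-type concentration of Lemma~\ref{lem:conv} for the graph component and the assumed LDPs (which already supply a Laplace principle) for $\bar G^n$ and $\bar J^n$ — is a correct and robust way to close this. An alternative and somewhat lighter route is to first establish the LDP for the unlabeled triple $(H^n,\bar G^n,\bar J^n)$ on $\iS\times\B\times\B^d$ with the cut-norm topology on $\iS$ (the lower bound and the LLN concentration already hold there; only exponential tightness of $\{H^n\}$ in $(\iS,d_{\infty\to1})$ needs to be supplied), after which the quotient map $\iS\times\B\times\B^d\to\hat{\mathcal X}$ \emph{is} continuous and one contracts twice. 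Either way, your instinct that this step needs an argument, rather than a citation of the contraction principle, is correct, and your weak-convergence plan matches the machinery the paper already has in place.
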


\subsection{Sparsity}
 Let $W:[0,1]^2\to [0,1]$ as before and let
$0<\alpha_n\le 1,\; n\in N,$ be a nonincreasing sequence.
If $\alpha_n\to 0$, we in addition assume that
\begin{equation}\label{unbdd-deg}
  \alpha_n^2n\to\infty.
\end{equation}

Define $\{\Gn\}$ by
\be\lbl{newPedge}
\P(X^n_{ij}=1)=\alpha_n W^n_{ij}.
\ee
If $\alpha_n\to\alpha_\infty>0$, $\{\Gn\}$ is a sequence of dense graphs as before.
If $\alpha_n\searrow 0$ then graphs $\{\Gn\}$ are sparse.
\begin{ex}
  Sparse Erd\H{o}s--R{\' e}nyi graphs: $W\equiv 1,$ $\alpha_n\searrow 0$. Dynamical models on sparse
  Erd\H{o}s--R{\' e}nyi graphs were studied in \cite{CopDieGia20, OliRei19}.
\end{ex}

On sparse graphs $\{\Gn\}$, the dynamical model takes the form
\begin{equation}\label{newKM}
  \dot u_i^n=f(u^n_i,t)+(\alpha_n n)^{-1} \sum_{j=1}^n X^n_{ij}D(u_i^n, u^n_j).
\end{equation}
The new scaling of the interaction term, $(\alpha_nn)^{-1}$, is used to account for sparsity.

To apply the analysis of the large deviations in the main part of the paper to the
model at hand, note that
$$
(\alpha_n n)^{-1} \sum_{j=1}^n X^n_{ij}D(u_i^n, u^n_j)=n^{-1} \sum_{j=1}^n \mathbf{X}^n_{ij}D(u_i^n, u^n_j),
$$
where
$$
\mathbf{X}^n_{ij} =\alpha_n^{-1} X^n_{ij}, \; \E\mathbf{X}^n_{ij}=W^n_{ij}.
$$
This suggests that in the sparse case one needs to study rescaled random variables
$$
\mathbf{H}^n=\alpha_n^{-1} H^n.
$$ 
By considering $\mathbf{H}^n(y)dy$ we can view $\mathbf{H}^n$ as taking values is the set of non-negative finite measures on $[0,1]^2$ with the vague topology.
This topology can be metrized so that the space is a Polish space \cite[Section A.4.1]{BudDup19}.
When viewed this way, $\mathbf{H}^n$ has the same large deviation properties as $\mathcal{N}^{n}/n^2\alpha_n$, where $\mathcal{N}^n$ is a Poisson random measure with 
intensity $\alpha_n n^2 W(y)dy$.

%Another difference is due to the scaling of the relative entropy costs.
%Expanding relative entropy for $r,b>0$ gives (up to vanishing errors) gives
%\[
%R\left(  (b\alpha_{n},1-b\alpha_{n})\left\Vert
%(r\alpha_{n},1-r\alpha_{n})\right.  \right) \approx
%n^{2}\alpha_{n}r\ell\left(  \frac{b}{r}\right)
%\]
%where $\ell\left(  z\right)  =z\log z-z+1$ for $z\geq0$.
This leads to the following statement.
\begin{thm}
  \label{thm.sparse}
   Let  $\ell\left(  z\right)  =z\log z-z+1$ for $z\geq0$ and
  suppose \eqref{unbdd-deg} holds. Consider a sequence of sparse
  W-random graphs defined by \eqref{newPedge}.  Then $\{{\mathbf{H}}^{n}\}_{n\in\mathbf{N}}$ as defined above 
  satisfies the LDP with rate function 
 \begin{equation}\label{new-rate}
 \int_{[0,1]^{2}}W(y)\ell\left(  \frac{V(y)}{W(y)}\right)  dy
 \end{equation}
 and the scaling sequence $n^2\alpha_n$.
\end{thm}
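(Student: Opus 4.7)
The plan is to apply the weak convergence method of \cite{BudDup19} following the same template as the proof of Theorem~\ref{thm.Wrandom}, now with scaling sequence $n^2\alpha_n$ and the vague topology on nonnegative finite measures on $[0,1]^2$ in place of the cut norm. The rate function \eqref{new-rate} is identified from the Bernoulli-to-Poisson asymptotic
\[
\frac{1}{\alpha_n}R\!\left(\mathrm{Ber}(\alpha_n v)\,\Vert\,\mathrm{Ber}(\alpha_n w)\right) = w\,\ell(v/w)+O(\alpha_n),
\]
valid uniformly on compacts of $\{v\ge 0,\,w>0\}$. Combined with the chain rule of \eqref{eqn:RER1} and division by $n^2\alpha_n$, this gives that the controlled relative entropies converge to $\int_{[0,1]^{2}}W(y)\ell(V(y)/W(y))\,dy$ whenever the controlled link probabilities are of the form $\alpha_n V^n$ with $V^n\to V$.

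Concretely, I would start from the representation
\[
-\frac{1}{n^{2}\alpha_n}\log Ee^{-n^{2}\alpha_n G(\mathbf{H}^{n})}=\inf_{\theta^n}E\!\left[\frac{1}{n^{2}\alpha_n}R(\theta^{n}\Vert\mu^{n})+G(\bar{\mathbf{H}}^{n})\right],
\]
with $\mu^{n}$ the product Bernoulli$(\alpha_n W^n_{ij})$ measure, parametrize controls by $\bar{M}^{n}(y):=\bar{\theta}^{n}_{k(i,j)}(\{1\})/\alpha_n$ for $y\in Q^{n}_{ij}$, and let $\bar{\mathcal{M}}^{n}$ denote the measure with density $\bar{M}^{n}$. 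The crucial intermediate step is a vague-topology analogue of Lemma~\ref{lem:conv}: for every $\phi\in C([0,1]^{2})$,
\[
\int\phi\,d\mathbf{H}^{n}-\int\phi\,d\bar{\mathcal{M}}^{n}\to 0\ \ \text{in probability.}
\]
This would follow from a variance-sensitive Bernstein bound applied to the martingale increments $\phi(i/n,j/n)\alpha_n^{-1}(X^{n}_{ij}-\bar{\theta}^{n}_{k(i,j)}(\{1\}))$; each such increment is bounded by $\|\phi\|_\infty/\alpha_n$ with conditional variance of order $\|\phi\|_\infty^{2}/\alpha_n$, so the standard Bernstein estimate yields an exponent of order $\delta^{2}n^{2}\alpha_n$, which under the hypothesis $n\alpha_n^{2}\to\infty$ both dominates the LDP scaling and is summable. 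Since the vague topology is metrized by testing against a countable dense family in $C([0,1]^{2})$, a countable union bound then gives the claimed convergence.

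Given this LLN, the lower bound is obtained by extracting a vaguely convergent subsequence $\bar{\mathcal{M}}^{n}\rightharpoonup V\,dy$ (tightness is automatic since $E\int d\mathbf{H}^{n}\leq\|W\|_\infty$), then applying Fatou together with lower semicontinuity of $V\mapsto\int W\ell(V/W)\,dy$ in the vague topology, which itself follows from the Legendre dual representation of $\ell$. For the upper bound, given a near-optimizer $V$, one uses the product control $\theta^{*,n}$ with marginals $\mathrm{Ber}(\alpha_n V^{n}_{ij})$ for the $L^{2}$-projection $V^{n}$ of $V$, applies the LLN to deduce $\bar{\mathbf{H}}^{n}\to V\,dy$ vaguely, and uses Jensen's inequality to bound $(n^{2}\alpha_n)^{-1}R(\theta^{*,n}\Vert\mu^{n})$ by $\int W^{n}\ell(V^{n}/W^{n})\,dy\to\int W\ell(V/W)\,dy$, with the reverse inequality coming from lower semicontinuity. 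Compactness of level sets of \eqref{new-rate} follows from tightness (on any bounded level set the total mass of $V\,dy$ is uniformly bounded) together with the same lower semicontinuity.

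The principal difficulty is the LLN step: the scaled Bernoulli variables $\alpha_n^{-1}X^{n}_{ij}$ are no longer bounded by $1$, so the bound-only Bernstein inequality used in Lemma~\ref{lem:conv} is too weak and must be replaced by a variance-sensitive version. The condition $n\alpha_n^{2}\to\infty$ is precisely the threshold at which the resulting exponent is still of order $n^{2}\alpha_n$, and it is also the reason one works in the vague topology rather than the cut norm (for the latter the $2^{n}$-sized class of discrete test functions creates an additional technical obstruction that the paper leaves as a conjecture). A secondary subtlety is handling cells on which $W$ is very small, for which one introduces a cutoff level, controls its contribution by a separate small argument using the Poisson-type tail of $\ell$, and then removes the cutoff in the limit.
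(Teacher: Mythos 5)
The paper gives no direct proof of Theorem~\ref{thm.sparse}: it simply observes that $\mathbf{H}^n(y)dy$ has the same large deviation behavior as $\mathcal{N}^n/(n^2\alpha_n)$ for a Poisson random measure $\mathcal{N}^n$ with intensity $\alpha_n n^2 W(y)dy$, and then invokes the standard LDP for scaled Poisson random measures, whose rate function is exactly \eqref{new-rate}. You instead run the weak convergence machinery of Section~\ref{sec.proof} directly on the Bernoulli model. This is a genuinely different and more self-contained route, and it in fact supplies the verification that the paper leaves implicit. Your two key ingredients are correct and well chosen: the asymptotic $\alpha_n^{-1}R(\mathrm{Ber}(\alpha_n v)\Vert\mathrm{Ber}(\alpha_n w)) = w\ell(v/w)+O(\alpha_n)$ is exactly the Poisson relative entropy $R(\mathrm{Po}(v)\Vert\mathrm{Po}(w))$ and explains why \eqref{new-rate} emerges, and replacing the bounded-increment Bernstein estimate behind Lemma~\ref{lem:conv} by a variance-sensitive version to cope with the unbounded increments $\alpha_n^{-1}\bar X^n_{ij}$ is the right modification.

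A few details need sharpening. Tightness of the controlled sequences $\{\bar{\mathcal{M}}^n\}$ and $\{\bar{\mathbf{H}}^n\}$ is \emph{not} automatic: your bound $E\int d\mathbf{H}^n\le 1$ applies to the uncontrolled process, whereas an arbitrary control can force total mass of order $1/\alpha_n$. As is standard in this method, one first reduces to control sequences with uniformly bounded relative entropy cost and then extracts uniform integrability from the superlinear growth of $\ell$. Similarly, the entropy expansion is not uniform on the full range $v\in[0,1/\alpha_n]$ that the controls permit, only on compacts; this, together with the $W\approx 0$ issue you flag, should be handled by a single truncation at level $M$ in $v$, using the superlinearity of $\ell$ to control the excess before sending $M\to\infty$. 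Finally, your attribution of the role of \eqref{unbdd-deg} is slightly off: for the vague-topology LLN (no $2^n$ union bound), the worst-case Bernstein exponent is of order $n^2\alpha_n^2$, so the weaker condition $n\alpha_n\to\infty$ already suffices there; the condition $n\alpha_n^2\to\infty$ is precisely the threshold for the cut-norm strengthening that the paper leaves as a conjecture, which is why it appears in the hypotheses.
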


However, 
we would like to strengthen this to the cut-norm topology. 
One direction is straightforward, in that  we can still use Bernstein's bound for the rescaled array, and thereby establish the large deviation lower bound in 
the stronger topology.
For example,
if we want to compare $\mathbf{H}^n$ and $W^n$ as would be needed to establish the LLN in the stronger topology,
we find

\begin{align*}
\mathbb{P}\left(  d_{\infty\rightarrow1}(\boldsymbol{H}^{n},W^{n})\geq
\delta\right)    & \leq\sup_{a^{n},b^{n}}\mathbb{P}\left(  \frac{1}{n^2}\sum_{i,j=1}%
^{n}a_{i}^{n}b_{j}^{n}\left[  \frac{a_{ij}}{\alpha_{n}}-W_{ij}^{n}\right]
\geq\delta\right)  \\
& =\sup_{a^{n},b^{n}}\mathbb{P}\left( \frac{1}{n^2} \sum_{i,j=1}^{n}a_{i}^{n}b_{j}%
^{n}\left[  a_{ij}-\alpha_{n}W_{ij}^{n}\right]  \geq\alpha_{n}\delta\right)
\\
& \leq e^{n\log2}e^{-n^{2}h(\alpha_{n}\delta)}\rightarrow0
\end{align*}
owing to our assumption \eqref{unbdd-deg} and $h(\alpha_n \delta) \approx \alpha^2_n\delta^2/2$.
The analogous estimate as needed to establish the large deviation
estimate in the cut norm also holds.
% and we conjecture that this is sufficient to prove the LDP for the dynamical model \eqref{newKM}.

For the large deviation upper bound it was essential to work with the equivalence class,
and there it was crucial that the set $\hat \iS$ was compact. 
We conjecture than an analogous compactness holds here as well. 
Specifically, let 
\[
 I(\hat{V})= \inf_{V\in \hat{V}} \int_{[0,1]^{2}}W(y)\ell\left(  \frac{V(y)}{W(y)}\right)  dy.
 \]
Then we conjecture that under reasonable conditions on $W$ the superlinear growth of $\ell$ implies level sets of  $I(\hat{V})$ are compact in 
the natural generalization of $\hat \iS$,
and that this together with the Bernstein's bound suffices to establish the upper bound.

\section{Appendix: Proof of Lower Semicontinuity of $I$}

We want to prove that
\[
\liminf_{n\rightarrow\infty}I(\hat{V}^{n})\geq I(\hat{V})
\]
when $\hat{V}^{n}\rightarrow\hat{V}$. The latter means $V^{n}\rightarrow V$ in
$d_{\infty\rightarrow1}$. Then we have to show that%
\[
\liminf_{n\rightarrow\infty}\inf_{V\in\hat{V}^{n}}\Upsilon(V,W)\geq\inf
_{V\in\hat{V}}\Upsilon(V,W),
\]
where
\[
\Upsilon(V,W)=\int_{[0,1]^{2}}G(V(\boldsymbol{x}),W(\boldsymbol{x}%
))d\boldsymbol{x},\quad G(v,w)=v\log\left(  \frac{v}{w}\right)  +(1-v)\log
\left(  \frac{1-v}{1-w}\right)  .
\]
To simplify the proof we assume that $W$ is continuous. If $W$ is just
measurable but bounded away from $0$ and $1$ this can be justified by Lusin's Theorem.
If $W$ is not bounded away from $0$ and $1$ then we can replace $W$ by $(W\vee \delta)\wedge (1-\delta)$ for $\delta >0$ and using a similar argument to that used below send 
$\delta \rightarrow 0$.
\begin{figure}[hbt!]
 \centering
 \includegraphics[scale=.7]{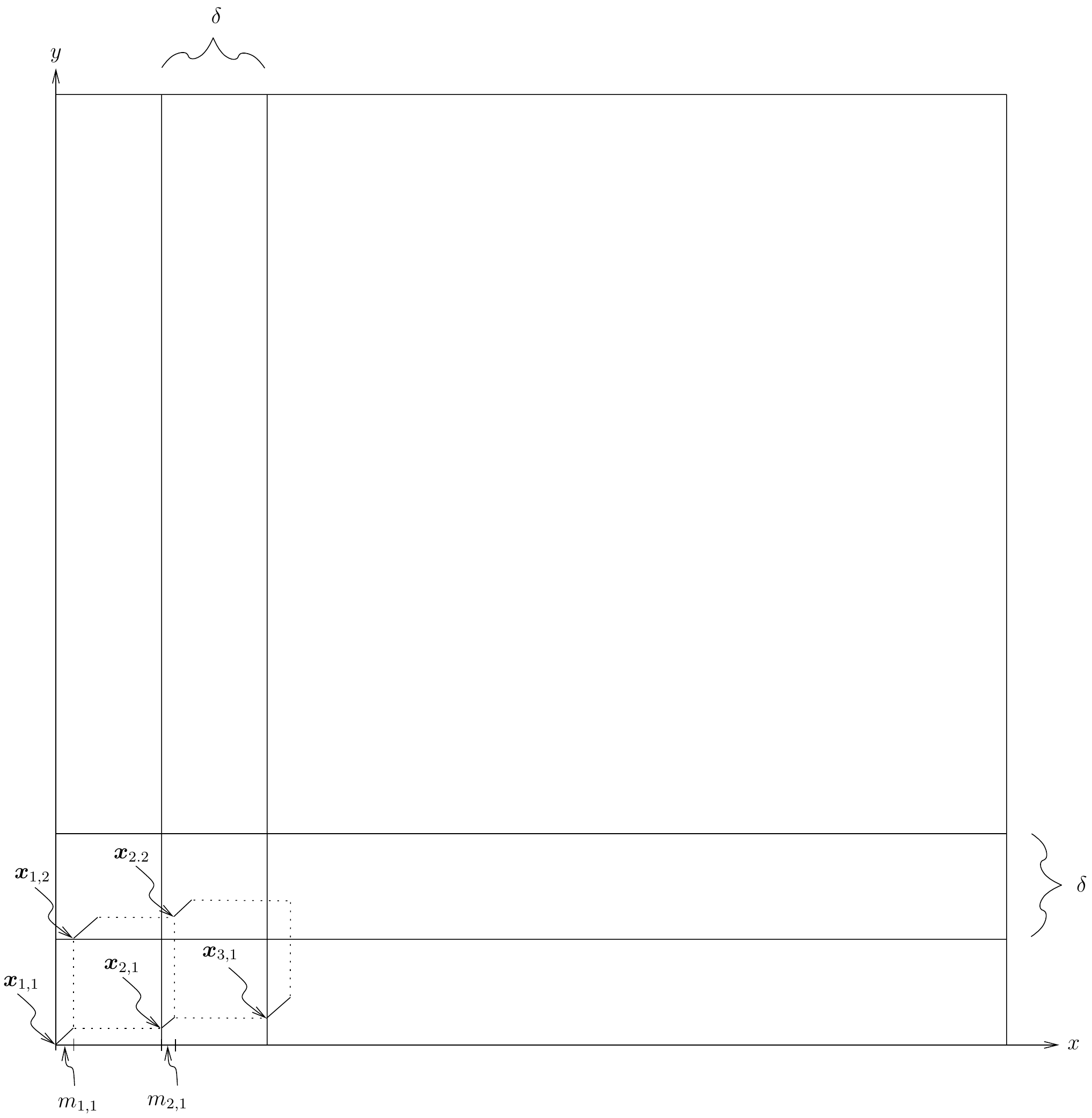}
 \caption{Construction of $\sigma$.}
\end{figure}

The proof will be based on weak convergence and a construction analogous to the ``chattering lemma'' of control theory. 
Let $\bar{V}^{n}$ come within $1/n$ of the
infimum in $\inf_{V\in\hat{V}^{n}}\Upsilon(V,W)$. Then there is $\sigma^{n}%
\in\mathcal{P}$ such that
\[
\Upsilon(\bar{V}^{n},W)=\Upsilon(V^{n},W\circ\sigma^{n}),
\]
and it is enough to show the following. Let any subsequence of $n$ be given
and consider a further subsequence (say $\bar{n}$). Then given $\varepsilon>0$
we can find $\sigma\in\mathcal{P}$ such that
\[
\liminf_{\bar{n}\rightarrow\infty}\Upsilon(V^{\bar{n}},W\circ\sigma^{\bar{n}%
})\geq\Upsilon(V,W\circ\sigma)-\varepsilon.
\]
To simplify notation we write $n$ rather than $\bar{n}$.

We define probability measures $\{\mu^{n}\}$ on $[0,1]^{5}$ by
\[
\mu^{n}(A_{1}\times A_{2}\times\cdots\times A_{5})=\int_{A_{2}\times A_{3}%
}1_{A_{1}}(V^{n}(x_{1},x_{2}))1_{A_{4}}(\sigma^{n}(x_{1}))dx_{1}1_{A_{5}%
}(\sigma^{n}(x_{2}))dx_{2}.
\]
By compactness we can assume that for the subsubsequence these converge weakly
with limit $\mu$. Also, we can write
\begin{align*}
\Upsilon(V^{n},W\circ\sigma^{n}) &  =\int_{[0,1]^{2}}G(V(x_{1},x_{2}%
),W(\sigma^{n}(x_{1}),\sigma^{n}(x_{2})))dx_{1}dx_{2}\\
&  =\int_{[0,1]^{5}}G(v,W(y_{1},y_{2}))\mu^{n}(dv\times dx_{1}\times
dx_{2}\times dy_{1}\times dy_{2}),
\end{align*}
and using the properties of $G$ (bounded and lsc) and $W$ (bounded and
continuous)%
\[
\liminf_{n\rightarrow\infty}\Upsilon(V^{n},W\circ\sigma^{n})\geq\int
_{\lbrack0,1]^{5}}G(v,W(y_{1},y_{2}))\mu(dv\times dx_{1}\times dx_{2}\times
dy_{1}\times dy_{2}).
\]
Given $\varepsilon>0$ we need to construct $\sigma\in\mathcal{P}$ such that
\[
\int_{\lbrack0,1]^{5}}G(v,W(y_{1},y_{2}))\mu(dv\times dx_{1}\times
dx_{2}\times dy_{1}\times dy_{2})\geq\Upsilon(V,W\circ\sigma)-\varepsilon.
\]

With subscripts denoting marginal distributions, it is clear that
\[
\mu_{2,4}(dx_{1}\times dy_{1})=\mu_{3,5}(dx_{2}\times dy_{2})
\]
and, since each $\sigma^{n}$ is a measure preserving bijection, that
\[
\mu_{2}(dx_{1})=dx_{1},\mu_{3}(dx_{2})=dx_{2},\mu_{4}(dy_{1})=dy_{1},\mu
_{5}(dy_{2})=dy_{2}.
\]
Thus both marginals of $\mu_{2,4}$ (and $\mu_{3,5}$) are Lebesgue measure, but
we do not know that $\mu_{2,4}$ is the measure induced by a measure preserving
bijection $\sigma$. We will approximate $\mu_{2,4}$ to construct $\sigma$, and
in doing so incur a small error in the integral which will be smaller than
$\varepsilon$.  

Let $\nu(dx\times dy)=\mu_{2,4}(dx\times dy)$. Then it suffices to find a
sequence $\theta_{k}\in\mathcal{P}$ such that if $\nu_{k}(A_{1}\times
A_{2})=\int_{A_{1}}1_{A_{2}}(\theta_{k}(x))dx$, then $\nu_{k}$ converges to
$\nu$ in the weak topology. The construction is as follows. Let $\delta=1/k$.
Then we partition $[0,1]^{2}$ according to
\[
T_{i,j}^{k}=[(i-1)\delta,i\delta)\times\lbrack(j-1)\delta,j\delta),\quad1\leq
i,j\leq k
\]
and define $m_{i,j}^{k}=\nu(T_{i,j}^{k})$. Let
\[
S(\boldsymbol{x},a)=\{\boldsymbol{x}+t(1,1):0\leq t<a\}.
\]
The graph $G\subset\lbrack0,1]^{2}$ of $\theta_{k}$ is constructed recursively
as follows.

Let $j=1$, and set $\boldsymbol{x}_{1,1}=(0,0)$. Then set $G_{1,1}%
=S(\boldsymbol{x}_{1,1},m_{1,1})$, and define $\boldsymbol{x}_{2,1}%
=(\delta,m_{1,1})$. We then iterate, setting
\[
G_{i+1,1}=G_{i,1}\cup S(\boldsymbol{x}_{i,1},m_{i,1})\text{ and }%
\boldsymbol{x}_{i+1,1}=\left(  i\delta,\sum_{r=1}^{i}m_{r,1}\right)
\]
until $i=k-1$. This assigns all the mass of $\nu([0,1)\times\lbrack
0,\delta))=\delta$ to nearby points consistent with a piecewise continuous
measure preserving bijection. Specifically, the projection of $G_{k,1}$ onto
the $y$-axis gives the set $[0,\delta)$.

Next consider $j=2$. To maintain that the graph generate a measure preserving
bijection, we now start with $\boldsymbol{x}_{1,2}=(m_{1,1},\delta)$. The
iteration is now
\[
G_{i+1,2}=G_{i,2}\cup S(\boldsymbol{x}_{i,2},m_{i,2})\text{ and }%
\boldsymbol{x}_{i+1,1}=\left(  i\delta+m_{i,1},\sum_{r=1}^{i}m_{r,2}\right)
.
\]
For $1<j\leq k$ the definitions are $\boldsymbol{x}_{1,j}=(\sum_{l=1}%
^{j}m_{1,l},j\delta)$ and
\[
G_{i+1,j}=G_{i,j}\cup S(\boldsymbol{x}_{i,j},m_{i,j})\text{ and }%
\boldsymbol{x}_{i+1,j}=\left(  i\delta+\sum_{l=1}^{j}m_{i,l},\sum_{r=1}%
^{i}m_{r,j}\right)  .
\]
See the figure below. Finally we set $G=G_{k,k}$. This graph defines an element
$\theta_{k}$ of $\mathcal{P}$. If $\nu_{k}(A_{1}\times A_{2})=\int_{A_{1}%
}1_{A_{2}}(\theta_{k}(x))dx$, then all mass inside $T_{i,j}^{k}$ under $\nu$
has stayed inside $T_{i,j}^{k}$. As a consequence $\nu_{k}$ converges weakly
to $\nu$, and the proof is complete.

\bibliographystyle{amsplain}
%\bibliography{cut}
\def\cprime{$'$} \def\cprime{$'$} \def\cprime{$'$}
\providecommand{\bysame}{\leavevmode\hbox to3em{\hrulefill}\thinspace}
\providecommand{\MR}{\relax\ifhmode\unskip\space\fi MR }
% \MRhref is called by the amsart/book/proc definition of \MR.
\providecommand{\MRhref}[2]{%
  \href{http://www.ams.org/mathscinet-getitem?mr=#1}{#2}
}
\providecommand{\href}[2]{#2}

\end{document}